\newcommand\C{{\mathbb{C}}}
\newcommand\N{{\mathbb{N}}}
\newcommand\R{{\mathbb{R}}}
\newcommand\T{{\mathbb T}}
\newcommand\Z{{\mathbb Z}}
\newcommand{\dist}{\operatorname{dist}}
\newcommand{\supp}{\operatorname{supp}}
\newcommand{\rank}{\operatorname{rank}}
\newcommand{\codim}{\operatorname{codim}}
\newcommand\BL{\operatorname{BL}}
\newcommand\HK{\operatorname{HK}}
\newcommand{\om}{\omega}
\newcommand{\fm}{{\mathfrak m}}
\let\oldepsilon\epsilon
\let\oldvarepsilon\varepsilon
\renewcommand{\epsilon}{\oldvarepsilon}
\renewcommand{\varepsilon}{\oldepsilon}
\theoremstyle{plain}
  \newtheorem{theorem}{Theorem}[section]
  \newtheorem{proposition}[theorem]{Proposition}
  \newtheorem{prop}[theorem]{Proposition}
  \newtheorem{lemma}[theorem]{Lemma}
  \newtheorem{corollary}[theorem]{Corollary}
\theoremstyle{definition}
  \newtheorem*{remark*}{Remark}
  \newtheorem*{question*}{Question}
\numberwithin{equation}{section}
\date{\today}
\subjclass[2010]{42B37, 44A12, 52A40}
\keywords{Discrete inequalities, Fourier transforms, duality}
\title{Fourier duality in the Brascamp--Lieb inequality}
\author[J. Bennett]{Jonathan Bennett}
\address{School of Mathematics, The Watson Building, University of Birmingham, Edgbaston,
Birmingham, B15 2TT, England.}
\email{J.Bennett@bham.ac.uk}
\author[E. Jeong]{Eunhee Jeong}
\address{School of Mathematics, Korea Institute for Advanced Study, Seoul 02455, Republic of Korea} 
\email{eunhee@kias.re.kr}
\begin{document}
\maketitle
\begin{abstract}
It was observed recently in work of Bez, Buschenhenke, Cowling, Flock and the first author, that the euclidean Brascamp--Lieb inequality satisfies a natural and useful Fourier duality property.
The purpose of this paper is to establish an appropriate discrete analogue of this. Our main result identifies the Brascamp--Lieb constants on (finitely-generated) discrete abelian groups with Brascamp--Lieb constants on their (Pontryagin) duals.
As will become apparent, the natural setting for this duality principle is that of locally compact abelian groups, and this raises basic questions about Brascamp--Lieb constants formulated in this generality. 
\end{abstract}
\section{Background and results}
The euclidean Brascamp--Lieb inequality is a broad generalisation of a range of important multilinear functional inequalities in mathematics, and takes the form
\begin{equation}\label{BL}
\left|\int_H f_1\otimes\cdots\otimes f_m\right|\leq\BL(H,\mathbf{p})\prod_{j=1}^m\|f_j\|_{L^{p_j}(H_j)};
\end{equation}
here $H_1,\hdots, H_m$ are euclidean spaces equipped with Lebesgue measure, $H$ is a subspace of the cartesian product $H_{1}\times\cdots\times H_m$, also with Lebesgue measure, and $\mathbf{p}=(p_j)\in [1,\infty]^m$. We refer to $(H,\mathbf{p})$ as the \emph{Brascamp--Lieb data} for the inequality \eqref{BL}. The expression $\BL(H,\mathbf{p})$, referred to as the \emph{Brascamp--Lieb constant}, is used to denote the smallest constant in the inequality \eqref{BL} over all input functions $f_j\in L^{p_j}(H_j)$, and may of course be infinite at this level of generality. This is one of a number of equivalent formulations of the Brascamp--Lieb inequality/constant (see \cite{BBBCF}), chosen here as it naturally relates to the elementary Fourier-invariance property
\begin{equation}\label{Finv}
\int_Hf_1\otimes\cdots\otimes f_m=\int_{H^\perp}\widehat{f}_1\otimes\cdots\otimes\widehat{f}_m;
\end{equation}
here $H^\perp$ denotes the orthogonal complement of $H$ in $H_1\times\cdots\times H_m$, and $\widehat{f}_j$ the $n_j$-dimensional Fourier transform, where $n_j=\dim H_j$. This invariance property is a generalisation of the basic fact that the euclidean Fourier transform maps convolution to pointwise multiplication, and has of course been used by many authors in a variety of contexts -- see, for example, \cite{Ball}, \cite{DT},  \cite{Roth} and \cite{BBBCF}.
Recently a corresponding notion of duality within the set of Brascamp--Lieb data was established in \cite{BBBCF}.
\begin{theorem}[Fourier duality  \cite{BBBCF}]\label{basicthm}
If $H$ is a subspace of $H_1\times\cdots\times H_m$ and $1\leq p_1,\hdots,p_m\leq\infty$, then
\begin{equation}\label{dualityid}
\BL(H,\mathbf{p})=B_{\mathbf{p}}\BL(H^\perp,\mathbf{p}'),
\end{equation}
where 
\begin{equation}\label{becks}
B_{\mathbf{p}}=\prod_{j=1}^m \left(\frac{p_j^{1/p_j}}{(p_j')^{1/p_j'}}\right)^{n_j/2}
\end{equation}
and $\mathbf{p}'=(p_j')$; here $p_j'$ denotes the conjugate exponent to $p_j$.\footnote{There is a minor technical point here that arises if $p_j=\infty$ for some $j$, and the $j$th projection map from $H$ to $H_j$ (or later on $G_j$) is not surjective. In this case the $L^\infty$ norm should be interpreted as the supremum rather than the essential supremum. We refer to the discussion of Fubini's theorem and its dual in Section \ref{Sect:examples} for a simple example.
}
\end{theorem}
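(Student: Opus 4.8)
The plan is to reduce everything to an explicit computation with centred Gaussians, using the Fourier-invariance identity \eqref{Finv} as the bridge. The essential input will be Lieb's theorem, in the form adapted to the subspace formulation \eqref{BL}: the Brascamp--Lieb constant is \emph{exhausted by centred Gaussian inputs}, i.e.
\[
\BL(H,\mathbf p)=\BLg(H,\mathbf p):=\sup\frac{\bigl|\int_H g_1\otimes\cdots\otimes g_m\bigr|}{\prod_{j=1}^m\|g_j\|_{L^{p_j}(H_j)}},
\]
the supremum running over all centred Gaussians $g_j(x)=e^{-\pi\langle A_jx,x\rangle}$ on $H_j$ with $A_j$ real, symmetric and positive definite. (This is the form of Gaussian exhaustion used in \cite{BBBCF}; I would quote it.) Granted this, \eqref{dualityid} reduces to proving the analogous identity for the Gaussian constants,
\[
\BLg(H,\mathbf p)=B_{\mathbf p}\,\BLg(H^\perp,\mathbf p'),
\]
after which one applies Lieb's theorem once more, now to the data $(H^\perp,\mathbf p')$ on the right-hand side.

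For the Gaussian identity I would combine two observations. First, \eqref{Finv} gives $\int_H g_1\otimes\cdots\otimes g_m=\int_{H^\perp}\widehat g_1\otimes\cdots\otimes\widehat g_m$, all integrals being absolutely convergent since each $\widehat g_j$ is itself a rapidly decaying Gaussian. Second, the elementary Gaussian formula $\widehat g_j=(\det A_j)^{-1/2}e^{-\pi\langle A_j^{-1}\cdot,\cdot\rangle}$ together with a direct evaluation of the resulting Gaussian integrals gives
\[
\|g_j\|_{L^{p_j}(H_j)}=\Bigl(\tfrac{(p_j')^{1/p_j'}}{p_j^{1/p_j}}\Bigr)^{n_j/2}\,\|\widehat g_j\|_{L^{p_j'}(H_j)},
\]
the determinant of $A_j$ dropping out between the two sides; this is exactly the Babenko--Beckner sharp Hausdorff--Young identity, which holds for Gaussians at \emph{every} exponent $p_j\in[1,\infty]$ (with the usual conventions $p^{1/p}=1$ at $p=1,\infty$), and multiplying over $j$ produces precisely the factor $B_{\mathbf p}^{-1}$ of \eqref{becks}. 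Substituting both observations into the ratio defining $\BLg(H,\mathbf p)$ shows it equals $B_{\mathbf p}$ times the corresponding ratio for $(H^\perp,\mathbf p')$ evaluated at the input $(\widehat g_1,\dots,\widehat g_m)$. Since $A_j\mapsto A_j^{-1}$ is a bijection of the positive definite symmetric matrices, and the Gaussian constants are unchanged under multiplying inputs by positive scalars, taking suprema on both sides yields $\BLg(H,\mathbf p)=B_{\mathbf p}\BLg(H^\perp,\mathbf p')$.

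The computation itself would be routine; the real work lies in securing Lieb's theorem in the generality required here. One needs Gaussian exhaustion of $\BL(H,\mathbf p)$ for \emph{all} $\mathbf p\in[1,\infty]^m$, including the endpoint exponents, and one must deal with the degenerate cases flagged in the footnote --- for instance when $p_j=\infty$ and the $j$-th coordinate projection of $H$ is not onto $H_j$, so that $\|f_j\|_\infty$ must be read as a supremum rather than an essential supremum --- as well as with the case $\BL(H,\mathbf p)=\infty$. The latter presents no difficulty: $B_{\mathbf p}$ is a fixed finite positive number, so infinitude transfers faithfully across \eqref{dualityid}, and the argument above still produces matching (possibly infinite) suprema. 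It is also worth recording the consistency check $B_{\mathbf p}B_{\mathbf p'}=1$, which reflects $(H^\perp)^\perp=H$ and shows \eqref{dualityid} is a genuine involution on Brascamp--Lieb data.
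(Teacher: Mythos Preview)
Your proposal is correct and follows essentially the same approach as the paper, which does not give a detailed proof but sketches precisely this route: combine the Fourier-invariance identity \eqref{Finv} with Lieb's theorem on Gaussian exhaustion and the fact that the Fourier transform maps centred Gaussians to centred Gaussians (referring to \cite{BBBCF} for details). Your explicit computation of the Gaussian norm ratio, producing the factor $B_{\mathbf p}$, is exactly the missing detail the paper leaves to the reader.
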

Theorem \ref{basicthm} tells us that, up to an explicit factor depending only on $n_1,\hdots,n_m$ and $\mathbf{p}$, the map
\begin{equation}\label{dualitymap}
(H,\mathbf{p})\mapsto (H^\perp,\mathbf{p}')
\end{equation}
is a constant-preserving involution on the set of Brascamp--Lieb data. In what follows we shall refer to $(H,\mathbf{p})$ and $(H^\perp,\mathbf{p}')$ as \emph{dual data}, and \eqref{dualitymap} as the \textit{duality map}.

One immediate consequence of Theorem \ref{basicthm} and the  Fourier invariance property \eqref{Finv} is the ``Fourier--Brascamp--Lieb inequality"
\begin{equation}\label{FBL}
\left|\int_H f_1\otimes\cdots\otimes f_m\right|\leq \BL(H,\mathbf{p})\prod_{j=1}^m A_{p_j}^{-n_j/2}\|\widehat{f}_j\|_{L^{p_j'}(H_j)},
\end{equation}
where $A_p^{n/2}:=(p^{1/p}/(p')^{1/p'})^{n/2}$ denotes the best constant in the classical $n$-dimensional forward and reverse Hausdorff--Young inequalities \cite{Beckner}, \cite{BrascampLieb}. Depending on where the exponents $p_j$ lie relative to $2$, the inequality \eqref{FBL} may be tighter than \eqref{BL} for \emph{non-gaussian} inputs $f_j$. An instance of this observation was implicitly used by Ball in his work on volumes of sections of cubes \cite{Ball}.

Perhaps the clearest benefit of Theorem \ref{basicthm}  (and our forthcoming Theorem \ref{dualitythm}) is that it allows one to switch between a Brascamp--Lieb datum and its dual, which may be rather more transparent; see \cite{BBBCF} and \cite{Kyoto} for an example of this. Perhaps the simplest interesting example is the duality between the data associated with Young's convolution inequality and H\"older's inequality, which goes some way to explain the nature of the constant $B_{\mathbf{p}}$ in \eqref{dualityid}. We elaborate on this and a variety of other examples in Section \ref{Sect:examples}.

Theorem \ref{basicthm} may be proved rather quickly using some quite heavy-duty machinery: one combines the Fourier-invariance property \eqref{Finv} with an application of Lieb's theorem \cite{L} on the exhaustion of $\BL(H,\mathbf{p})$ by centred gaussian functions $f_j$, along with the elementary fact that the Fourier transform maps centred gaussians to centred gaussians. We refer to \cite{BBBCF} for further explanation.

The Fourier-invariance property captured by \eqref{Finv} is naturally generalised to the context of \textit{locally compact abelian} (LCA) groups. Here $H$ is taken to be a closed subgroup of $G_1\times\cdots\times G_m$, with each $G_j$ a LCA group, and
$H^\perp$ the subgroup of the dual group $\widehat{G}_1\times\cdots\times \widehat{G}_m$ given by
$$
H^\perp=\{\chi\in \widehat{G}_1\times\cdots\times \widehat{G}_m: \chi(h)=1\;\;\mbox {for all }\;\;h\in H\}.$$ Defining $\BL_{\mathbf{G}}(H, \mathbf{p})$ to be the best constant in 
\begin{equation}\label{BLabstract}
\left|\int_H f_1\otimes\cdots\otimes f_m \right|\leq\BL_{\mathbf{G}}(H,\mathbf{p})\prod_{j=1}^m\|f_j\|_{L^{p_j}(G_j)},
\end{equation}
it is natural to ask whether
\begin{equation}\label{conj}
\BL_{\mathbf{G}}(H, \mathbf{p})<\infty \iff \BL_{\mathbf{ \widehat{G}}}(H^\perp, \mathbf{p}')<\infty
\end{equation}
in full generality.
Of course the specific constants $\BL_{\mathbf G}(H,\mathbf p)$ and $\BL_{\mathbf{\widehat{G}}}(H^\perp,\mathbf p')$ depend on how the Haar measures implicit in \eqref{BLabstract} are normalised. There is no canonical choice of normalisation for the $G_j$ or $H$, although once chosen, the normalisations for the $\widehat{G}_j$ and $H^\perp$ are naturally prescribed by insisting that Plancherel's theorem on $G_j$ holds with constant $1$, and that the forthcoming abstract Fourier-invariance property \eqref{poisson} holds for $H$ with constant $1$. 
In the setting of discrete groups -- the focus of this work --  it is natural (and conventional) to assign counting measure to $G$ and $H$, and correspondingly require that the measures on the compact groups $\widehat G$ and $H^\perp$ are normalised to have mass $1$. 

In light of Theorem \ref{basicthm}, the basic structure theory of locally compact abelian groups (see \cite[Theorem 24.30]{HR}) suggests an explicit quantitative form of \eqref{conj}. In particular, since each factor $G_j$ is isomorphic to $\mathbb{R}^{n_j}\times G_{j,0}$, where $G_{j,0}$ has a compact open subgroup, it seems natural to expect that
\begin{equation}\label{dualityidgen}
\BL_{\mathbf{G}}(H,\mathbf{p})=B_{\mathbf{p}}\BL_{\mathbf{\widehat{G}}}(H^\perp,\mathbf{p}'),
\end{equation}
with $B_{\mathbf{p}}$ given by \eqref{becks}.

In this paper we consider the validity of \eqref{dualityidgen}
in the particular context of (finitely-generated) \emph{discrete} groups $G_j$. Here of course the euclidean factor in $G_j$ is trivial for each $j$, so that the constant appearing in \eqref{dualityidgen} is 1.
It should be pointed out that multilinear functionals of the form
\begin{equation}\label{BLform}
(f_1,\hdots, f_m)\mapsto \int_H f_1\otimes\cdots\otimes f_m,
\end{equation} 
which arise frequently in the setting of euclidean spaces $G_j=\mathbb{R}^{n_j}$, are also common in discrete settings.
For discrete (usually finite) groups $G_j$ they appear in additive combinatorics in the form of higher order (or Gowers) inner products -- see Tao \cite{TaoHOFA}. In such discrete contexts the Fourier invariance property \eqref{Finv} also plays an important role, and this dates back at least to the celebrated work of Roth \cite{Roth} on the existence of arithmetic progressions of length three in subsets of $\mathbb{Z}$. For torsion-free discrete groups (integer lattices) such multilinear functionals also naturally arise in induction-on-scales arguments within harmonic analysis (for example in \cite{BB}), and in optimisation-theoretic aspects of communication theory (see \cite{CDKSY0}).
We refer to Section \ref{Sect:examples} for some interpretations of \eqref{dualityidgen} in a selection of settings.

Our main theorem is the following:
\begin{theorem}\label{dualitythm} Let $m\ge 1$, $\mathbf p=(p_j)\in [1,\infty]^m$,  and $G_1,\hdots, G_m$ be finitely-generated abelian groups. Then,
\begin{equation}\label{isome}
 \BL_{\mathbf G}(H,\mathbf p) =\BL_{\widehat{\mathbf G}} (H^\perp, \mathbf p')
\end{equation}
for all subgroups $H$ of $G_1\times\cdots\times G_m$.
 \end{theorem}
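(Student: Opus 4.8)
The plan is to reduce the general finitely-generated case to a handful of elementary "building block" cases by exploiting the structure theorem for finitely-generated abelian groups, together with multiplicativity of Brascamp--Lieb constants under direct sums. By the structure theorem, each $G_j$ is isomorphic to $\Z^{a_j}\times F_j$ with $F_j$ a finite abelian group, and the whole ambient group $G=G_1\times\cdots\times G_m$ decomposes as a direct sum of cyclic groups. The first step is therefore to establish a \emph{tensorisation} principle: if $G_j=G_j'\times G_j''$ for each $j$ and $H$ respects this splitting, i.e. $H=H'\times H''$ with $H'\le\prod_j G_j'$ and $H''\le\prod_j G_j''$, then $\BL_{\mathbf G}(H,\mathbf p)=\BL_{\mathbf G'}(H',\mathbf p)\,\BL_{\mathbf G''}(H'',\mathbf p)$, and similarly on the dual side, with $\widehat{G_j}=\widehat{G_j'}\times\widehat{G_j''}$ and $(H')^\perp\times(H'')^\perp$ reproducing $H^\perp$. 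This is a standard Fubini-type argument; the only subtlety is compatibility of Haar measure normalisations (counting measure upstairs, total mass one downstairs), which multiply correctly precisely because our normalisation convention is itself multiplicative.

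Next I would observe that an arbitrary subgroup $H$ of a direct product need not split as a product, so tensorisation alone does not immediately finish the job. The second, and essential, step is an \emph{invariance under automorphisms}: if $\Phi_j\colon G_j\to G_j$ are group automorphisms, then applying $\Phi=\prod_j\Phi_j$ carries $H$ to $\Phi(H)$ and, since each $\Phi_j$ preserves counting measure and pulls back Haar measure on the dual appropriately, $\BL_{\mathbf G}(H,\mathbf p)=\BL_{\mathbf G}(\Phi(H),\mathbf p)$; on the dual side the contragredient automorphisms $\widehat{\Phi_j}$ send $H^\perp$ to $(\Phi(H))^\perp$ with the same constant. Using automorphisms (and the freedom to enlarge the list of $G_j$ by appending trivial factors, or to replace $G_j$ by a subgroup containing the $j$-th projection of $H$) one can bring any $H$ into a normal form adapted to a Smith-normal-form presentation of the inclusion $H\hookrightarrow G$. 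Concretely, after such reductions the problem decomposes, via the tensorisation step, into the following atomic cases: (i) $G_j=\Z$ for all $j$, i.e. the torsion-free/lattice case, and (ii) $G_j$ cyclic of prime-power order, i.e. the finite case; and within each, $H$ may be assumed to be of the simplest possible shape (a "coordinate subspace" type subgroup, or the graph of a projection).

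The third step is to verify the duality identity \eqref{isome} directly in these atomic cases. For the torsion-free case $G_j=\Z$ one can argue by a limiting/transference comparison with the euclidean Theorem \ref{basicthm}: a subgroup $H\le\Z^k$ is a lattice, its real span $H_{\R}\le\R^k$ has orthogonal complement with $H^\perp$ as its dual lattice, and the discrete Brascamp--Lieb functional over $H$ can be compared to a periodised/rescaled version of the continuous one over $H_\R$; since $B_{\mathbf p}=1$ here (no euclidean factors means $n_j=0$ in \eqref{becks} when we are purely discrete --- the euclidean constant enters only through the comparison and cancels between the datum and its dual exactly as in Theorem \ref{basicthm}), one recovers $\BL_{\mathbf G}(H,\mathbf p)=\BL_{\widehat{\mathbf G}}(H^\perp,\mathbf p')$. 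For the finite cyclic prime-power case the constant can be computed essentially explicitly: the extremisers for finite-group Brascamp--Lieb functionals are indicator functions of subgroups (the finite analogue of Lieb's gaussian-exhaustion theorem), the Fourier transform of a subgroup indicator is a constant multiple of the indicator of its annihilator, and plugging these into \eqref{BLabstract} on each side gives matching values once the mass-one normalisation on the dual is taken into account.

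I expect the \textbf{main obstacle} to be the second step: establishing that automorphisms plus tensorisation genuinely suffice to reduce every subgroup $H$ to the atomic normal forms, and in particular cleanly separating the torsion-free and torsion parts of $H$ relative to those of $G$ (one cannot simply intersect $H$ with the torsion subgroup of $G$ and split off a complement, because the torsion subgroup of $H$ need not be a direct summand of $G$ in a way compatible with the product structure). The right tool is likely a simultaneous Smith-normal-form / elementary-divisor argument for the pair $(H\hookrightarrow G)$, carried out one prime at a time, but making this interact correctly with the \emph{product} structure $G=\prod_j G_j$ --- rather than treating $G$ as an abstract abelian group --- requires care, since the relevant automorphisms must be taken within $\prod_j\operatorname{Aut}(G_j)$, not in $\operatorname{Aut}(G)$. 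A secondary technical point, flagged already in the footnote to Theorem \ref{basicthm}, is the correct interpretation of $L^\infty$ (supremum versus essential supremum) when some $p_j=\infty$ and the $j$-th projection of $H$ is not all of $G_j$; this must be tracked through every reduction, but it is a bookkeeping matter rather than a conceptual one.
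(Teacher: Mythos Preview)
Your reduction strategy has a genuine gap at exactly the point you flag as the main obstacle, and I do not believe it can be repaired along the lines you sketch. Smith normal form for the inclusion $H\hookrightarrow G$ requires automorphisms of $G$ that mix the factors $G_j$; restricting to $\prod_j\operatorname{Aut}(G_j)$ is far too rigid. Concretely, take $m=2$, $G_1=G_2=\Z/4\Z$, and $H=\{(x,2x):x\in\Z/4\Z\}$. The only automorphisms of $\Z/4\Z$ are multiplication by $1$ or $3$, and one checks that $H$ is invariant under every element of $\operatorname{Aut}(G_1)\times\operatorname{Aut}(G_2)$; yet $H$ is not a product of subgroups of $G_1$ and $G_2$ (it has order $4$ while $\pi_1(H)\times\pi_2(H)$ has order $8$), so tensorisation does not apply. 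If you enlarge your list of ``atomic'' cases to include all graphs of homomorphisms, you have not reduced anything. Your torsion-free atomic case is also shaky: the transference sketch confuses the $n_j$ in \eqref{becks} (which would be the real ranks, not zero) and in any event the discrete constant on lattices is always $1$ or $\infty$, so what is really needed there is the equivalence of \emph{finiteness}, not a limiting comparison of values.

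The paper's approach is entirely different and avoids any structural decomposition of $H$. It first proves the qualitative statement $\BL_{\mathbf G}(H,\mathbf p)<\infty\iff\BL_{\widehat{\mathbf G}}(H^\perp,\mathbf p')<\infty$ by showing directly that the rank/codimension finiteness criteria (Theorems \ref{discrete} and \ref{torus}) are equivalent: given a subgroup $W\le H^\perp$ one constructs $V=\bigl(\prod_j(\widehat\pi_jW)^\perp\bigr)\cap H\le H$ witnessing the rank condition, and vice versa, via elementary rank--dimension identities (Lemma \ref{rank_dim}). Only \emph{after} finiteness is settled does it invoke the explicit extremiser descriptions \eqref{optimal} and \eqref{optimal_t} (subgroup indicators on each side) together with the abstract Fourier--invariance identity \eqref{poisson}; since the Fourier transform of a subgroup indicator is a multiple of the annihilator indicator, one obtains $\BL_{\mathbf G}(H,\mathbf p)\le\BL_{\widehat{\mathbf G}}(H^\perp,\mathbf p')$ and the reverse inequality symmetrically. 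The point is that the argument works globally for arbitrary $H$, with no normal-form reduction.
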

An immediate consequence of Theorem \ref{dualitythm} is the inequality
 $$
 \left|\int_H f_1\otimes\cdots\otimes f_m \right|\leq\BL_{\mathbf{G}}(H,\mathbf{p})\min\left\{\prod_{j=1}^m\|f_j\|_{L^{p_j}(G_j)},\prod_{j=1}^m\|\widehat{f}_j\|_{L^{p_j'}(\widehat{G}_j)}\right\},$$
 which, provided $p_j<2$ for at least one $j$, constitutes a ``Hausdorff--Young improvement" of the standard discrete Brascamp--Lieb inequality given by \eqref{BLabstract}. 
 
\subsection*{Overview of the proof of Theorem \ref{dualitythm}}
In brief, our approach to Theorem \ref{dualitythm} amounts to establishing a good understanding of both $\BL_{\mathbf G}(H,\mathbf p)$  and $\BL_{\widehat{\mathbf G}} (H^\perp, \mathbf p')$, and connecting them via a discrete form of the Fourier--invariance property \eqref{Finv}.

For finitely-generated groups $\mathbf{G}$ (the subject of Theorem \ref{dualitythm}) the constant $\BL_{\mathbf G}(H,\mathbf p)$ is already quite well-understood -- see the forthcoming Theorem 
\ref{discrete} due to
Christ \cite{Ch1}. In particular, if $\BL_{\mathbf G}(H,\mathbf p)$ is finite, then it may be  achieved by testing on $f_j$ of the form $\chi_{\Gamma_j}$, where $\chi_{\Gamma_j}$ is the indicator function of a finite subgroup $\Gamma_j$ of $\pi_j(H)$; here $\pi_j:G_1\times\cdots\times G_m\rightarrow G_j$ is the $j$th coordinate map.

A similar result for $\BL_{\widehat{\mathbf G}} (H^\perp, \mathbf p')$ is, however, not readily available, and we establish this with the forthcoming Theorem \ref{torus}. In particular, we show that if $\BL_{\widehat{\mathbf G}} (H^\perp, \mathbf p')$ is finite, then it may be achieved by testing 
on $f_j$ of the form $\chi_{\Gamma_j}$ where now $\Gamma_j$ is a subgroup of $\widehat{\pi}_j(H^\perp)$ which contains its maximal torus; here $\widehat{\pi}_j$ denotes the $j$th coordinate map from $\widehat{G}_1\times\cdots\times \widehat{G}_m$ to $\widehat{G}_j$.

To connect these descriptions of $\BL_{\mathbf G}(H,\mathbf p)$ and $\BL_{\widehat{\mathbf G}} (H^\perp, \mathbf p')$, as Theorem \ref{dualitythm} requires, we appeal to a discrete form of the Fourier invariance property \eqref{Finv}, along with the fact that one of the two classes of indicator functions above is the Fourier transform of the other. So far, this approach is similar to that taken for Theorem \ref{basicthm} in the euclidean setting. However, Theorem \ref{dualitythm} presents difficulties beyond Theorem \ref{basicthm} due to the absence of a suitably strong form of Lieb's theorem in the discrete and compact settings. While it is true that we have access to suitable extremisers in both of those settings, these special functions $f_j$ alone are not sufficient to test for the \textit{finiteness} of the Brascamp--Lieb constant in either case (in contrast with the role of gaussians in the euclidean setting). As a result the equivalence of finiteness property \eqref{conj} (a qualitative form of Theorem \ref{dualitythm}) needs to be established before we can begin.

\subsection*{Structure of the paper}
In Section \ref{Sect:examples} we describe a variety of examples aimed at illustrating the action of the duality map \eqref{dualitymap}. In Section \ref{simple}, as preparation for our proof of Theorem \ref{dualitythm}, we establish \eqref{conj} in the \textit{euclidean} setting (a qualitative form of Theorem \ref{basicthm}) without appealing to Lieb's theorem. We also take the opportunity here to make some further structural observations about the duality map in the euclidean setting without recourse to Lieb's theorem.
In Section \ref{Sect:discrete} we apply the ideas from Section \ref{simple} to establish \eqref{conj} for finitely-generated groups (the aforementioned qualitative form of Theorem \ref{dualitythm}), which combined with Theorems \ref{discrete} and \ref{torus} leads to Theorem \ref{dualitythm}. We prove Theorem \ref{torus} in Section \ref{pf}.
Finally, in  Section \ref{further} we comment on some further lines of enquiry, including Brascamp--Lieb inequalities on non-abelian groups, and the duality map in the broader setting of Lorentz--Brascamp--Lieb inequalities.

\subsubsection*{Acknowledgements} We thank both Michael Cowling and Alessio Martini for helpful discussions.
The second author (E. Jeong) was  supported by the POSCO Science Fellowship and a KIAS Individual Grant no. MG070502.

\section{Brascamp--Lieb data and their duals: some examples}\label{Sect:examples}
Here we describe some examples of Brascamp--Lieb data $(H,\mathbf{p})$ and their duals $(H^\perp,\mathbf{p}')$, most of which may be naturally formulated in the general setting of LCA groups $G_1,\hdots, G_m$.
\subsection*{Fubini's theorem}
The very simplest example is when $H=G_1\times\cdots\times G_m$ and $p_1=\cdots=p_m=1$, where the inequality becomes an identity with $\BL(H,\mathbf{p})=1$, and merely amounts to Fubini's theorem. In this case $H^\perp=\{0\}$ and $p_j'=\infty$ for each $j$, and so the dual Brascamp--Lieb inequality amounts to the trivial statement that
$$
|f_1(0)\cdots f_m(0)|\leq \|f_1\|_\infty\cdots\|f_m\|_\infty.
$$
\subsection*{Young's convolution and H\"older's inequalities}
For proper subgroups $H$, identifying $\BL(H,\mathbf{p})$ is of course a much more delicate matter.
An important example is Young's convolution inequality on a LCA group $G$, which corresponds to the subgroup
\begin{equation}\label{dat1}
H=\{(x_1,\hdots,x_m)\in G\times \cdots\times G :x_1+\cdots+x_m=0\}
\end{equation} 
and exponents satisfying 
\begin{equation}\label{dat2}
\frac{1}{p_1}+\cdots+\frac{1}{p_m}=m-1,\;\;\;\;\;1\le p_1,\hdots,p_m\le \infty.
\end{equation}
In this case $$H^\perp=\{(x_1,\hdots,x_m)\in \widehat{G}\times \cdots\times \widehat{G} :x_1=\cdots=x_m\}$$ and $\frac{1}{p_1'}+\cdots+\frac{1}{p_m'}=1$, so that 
the dual data is that of the $m$-linear H\"older inequality and thus $\BL(H^\perp,\mathbf{p}')=1$. For discrete groups $G$ it is also well-known that $\BL(H,\mathbf{p})=1$, in accordance with Theorem \ref{dualitythm}. In fact much more is known in the case of Young's data (data satisfying \eqref{dat1} and \eqref{dat2}) -- indeed for quite general LCA groups $G$ the conjectural identity \eqref{dualityidgen} follows quickly from the work of Beckner \cite{Beckner}.

\subsection*{Higher order inner products}
Other interesting examples may be found in additive combinatorics in the form of bounds on higher order analogues of classical inner products. Following \cite{TaoHOFA}, for a locally compact abelian group $G$ and natural number $k$, the \emph{Gowers inner product} $\mathfrak G_k((f_\omega)_{\omega\in \{0,1\}^k})$ on $G$ is defined by
\[ \mathfrak G_k\big((f_\omega)_{\omega\in \{0,1\}^k}\big) =\int_{G^{k+1}} \prod_{\omega\in\{0,1\}^k} \mathcal C^{|\omega|} f_\omega(x+\omega \cdot h) \,d\mu(x)\,d\mu(h_1)\cdots d\mu(h_k),\]
where $\omega=(\omega^1,\cdots,\omega^k)\in\{0,1\}^k$, $|\omega|=\omega^1+\cdots +\omega^k$,  $\mathcal Cf=\bar f$ is the complex conjugate map, and $\mu$ is the normalised Haar measure on $G$. This gives rise to the   celebrated \emph{uniformity norm}  $ \|f\|_{U^k(G)}:= \mathfrak G_k((f_\omega)_{\omega\in \{0,1\}^k})^{1/2^k}$ by setting $f_\om=f $  for all $\om$.  The Gowers inner product is an example of a Brascamp--Lieb form \eqref{BLform} corresponding to the group $H=H_k(G)$ given  by
\[ H_k(G):=\left\{ \mathbf y =(y_\om)\in G^{\{0,1\}^k} : \,  y_\om =x+\om\cdot h, \quad x\in G,\, h\in G^k\right\}.\]
The uniformity norms were first introduced by Gowers \cite{Go} to study Szemer\'edi's theorem on a finite abelian group, and the notion was extended to locally compact abelian groups by Eisner and Tao \cite{ET}. Interpreting the dual data $(H_k(G)^\perp, \mathbf{p}')$ is also straightforward in this setting, indeed
$$H_k(G)^\perp=\Big\{  \textbf x=(x_\omega) \in \widehat{G}^{\{0,1\}^k} :\, \sum_{\omega\in \mathbf F}  x_\omega =0\text{ for any  face $\mathbf F$ of $\{0,1\}^k$}\Big\}.$$
This is manifestly isomorphic to $$\HK^{k}(\widehat{G}, \le k-2):=\Big\{  \textbf x=(x_\omega) \in \widehat{G}^{\{0,1\}^k} :\, \sum_{\omega\in \mathbf F}  (-1)^{|\om|}x_\omega =0\text{ for any  face $\mathbf F$ of $\{0,1\}^k$}\Big\},$$ known as the \textit{Host--Kra group} (see \cite{TaoHOFA}) of $\widehat{G}$. 
In the case of finitely-generated groups $G$, Theorem \ref{dualitythm} therefore establishes that 
\begin{equation}\label{hk}
\BL_{\mathbf G}(H_k(G),\mathbf p)=\BL_{\widehat{\mathbf G}}(\HK^{k}(\widehat{G},\le k-2),\mathbf p').
\end{equation}

\subsubsection*{Remark} The identity \eqref{hk} and its analogue for euclidean spaces provided by Theorem \ref{basicthm} offer an alternative perspective on the question of Lebesgue space bounds for the Gowers inner products. Here the literature appears to be rather incomplete. In \cite{ET} the authors showed that $\BL_{\mathbf G}(H_k(G),\mathbf p)<\infty$ when $p_\om =p_k:=2^k/(k+1)$,\footnote{In particular they showed that  $\BL_{\mathbf G}(H_k(\R),\mathbf p) = 2^k/ (k+1)^{(k+1)/2}$ for $p_\om=p_k$.} and certain off-diagonal Lebesgue bounds were later established by Neuman \cite{N}.
We note that if $p_\om\geq 2$ for each $\om$  then the \emph{inequality} 
$\BL_{\widehat{\mathbf G}}(\HK^{k}(\widehat{G},\le k-2),\mathbf p')\leq \BL_{\mathbf G}(H_k(G),\mathbf p)$ is an elementary consequence of the abstract form of the Fourier invariance property \eqref{Finv} (see \eqref{poisson}) followed by the Hausdorff--Young inequality. Theorem \ref{dualitythm} therefore reduces the task of bounding $\BL_{\mathbf G}(H_k(G),\mathbf p)$ to the (ostensibly simpler if $p_\om\geq 2$ for all $\om$) task of bounding $\BL_{\widehat{\mathbf G}}(\HK^{k}(\widehat{G},\le k-2),\mathbf p')$.

We refer to  \cite{BBBCF}  for some further examples of Brascamp--Lieb data and their duals arising in euclidean harmonic analysis and PDE.

\section{Structural properties of the duality map: elementary arguments in the euclidean setting}\label{simple}
The main purpose of this section is to provide an elementary proof (in particular, avoiding Lieb's theorem) of the equivalence of finiteness of the euclidean Brascamp--Lieb constant under the duality map \eqref{dualitymap}. As indicated in the introduction, this may be viewed as a preparatory step towards the more abstract framework of Section \ref{Sect:discrete}, where suitably strong analogues of Lieb's theorem are unavailable. Throughout this section $H_1,\hdots,H_m$ and $H\leq H_1\times\cdots\times H_m$ denote euclidean spaces with Lebesgue measure, and $\mathbf{p}=(p_j)\in [1,\infty]^m$.
\begin{theorem}\label{hilbert}
$$\BL(H,\mathbf{p})<\infty\iff\BL(H^\perp,\mathbf{p}')<\infty$$
\end{theorem}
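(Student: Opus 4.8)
The plan is to prove the equivalence of finiteness under the duality map \eqref{dualitymap} using only the Fourier-invariance identity \eqref{Finv}, the Hausdorff--Young inequality, and structural facts about Brascamp--Lieb data, thereby avoiding Lieb's theorem entirely. The key observation is that one does not need extremisers to transfer \emph{finiteness}: it suffices to have a finite family of necessary and sufficient conditions for $\BL(H,\mathbf{p})<\infty$ that is itself self-dual under \eqref{dualitymap}.

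First I would recall the scaling/dimension condition and the local conditions that characterise finiteness of $\BL(H,\mathbf p)$ (Bennett--Carbery--Christ--Tao): $\BL(H,\mathbf p)<\infty$ if and only if $\dim H=\sum_j \tfrac{1}{p_j}\dim\pi_j(H)$ together with $\dim V\le \sum_j \tfrac{1}{p_j}\dim\pi_j(V)$ for every subspace $V\le H$, where $\pi_j$ is the $j$th coordinate projection. The heart of the argument is then to check that this system of inequalities is preserved (in an appropriate inverted sense) when one passes from $(H,\mathbf p)$ to $(H^\perp,\mathbf p')$. Here I would use the elementary linear-algebra dictionary between subspaces of $H$ and subspaces of $H^\perp$: a subspace $V\le H$ corresponds to its ``annihilator-type'' counterpart inside $H^\perp$, and the projections interact with orthogonal complements via $\dim \pi_j(V) + \dim(\pi_j|_{H^\perp})(\text{counterpart}) = n_j - (\text{something})$, using the rank--nullity relations for the maps $\pi_j$ restricted to $H$ and to $H^\perp$. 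Substituting $p_j'=p_j/(p_j-1)$, i.e. $\tfrac{1}{p_j}+\tfrac{1}{p_j'}=1$, the scaling equality for $(H,\mathbf p)$ becomes exactly the scaling equality for $(H^\perp,\mathbf p')$, and each subspace inequality for $H$ transforms into the corresponding one for $H^\perp$. This gives the qualitative equivalence purely combinatorially.

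Alternatively — and this is probably the cleaner route to present since it foreshadows the discrete argument — I would argue directly with the inequality. Suppose $\BL(H^\perp,\mathbf p')<\infty$. Given $f_j\in L^{p_j}(H_j)$, apply \eqref{Finv} to write $\int_H f_1\otimes\cdots\otimes f_m = \int_{H^\perp}\widehat f_1\otimes\cdots\otimes\widehat f_m$, bound the right side by $\BL(H^\perp,\mathbf p')\prod_j\|\widehat f_j\|_{L^{p_j'}(H_j)}$, and then invoke Hausdorff--Young $\|\widehat f_j\|_{p_j'}\le A_{p_j}^{n_j/2}\|f_j\|_{p_j}$ when $p_j\le 2$. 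This immediately yields $\BL(H,\mathbf p)<\infty$ \emph{provided all $p_j\le 2$}. The remaining work, and the main obstacle, is handling exponents $p_j>2$, where Hausdorff--Young runs the wrong way; for those factors one instead runs the reverse Hausdorff--Young inequality, which requires $f_j$ to be (a modulation of) a positive-definite function, so one must first reduce to such inputs. The standard reduction is that $\BL(H,\mathbf p)$ is attained, or approached, on nonnegative $f_j$ whose Fourier transforms are themselves nonnegative — e.g. by replacing $f_j$ with $f_j*\widetilde{f_j}$ (so $\widehat{f_j}=|\widehat{g_j}|^2\ge0$) and tracking how the form and the norms transform — after which the reverse Hausdorff--Young inequality applies on the $p_j>2$ factors. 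Combining the two directions and the symmetry $\mathbf p\leftrightarrow\mathbf p'$, $H\leftrightarrow H^\perp$ gives the stated biconditional. I expect the bookkeeping in this positive-definite reduction, making sure the extremising sequences survive the operation $f_j\mapsto f_j*\widetilde{f_j}$ with controlled loss, to be the one genuinely delicate point; everything else is the linear algebra of complementary subspaces and the arithmetic of conjugate exponents.
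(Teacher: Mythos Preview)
Your first approach --- checking directly that the BCCT scaling and dimension conditions are preserved under $(H,\mathbf p)\mapsto(H^\perp,\mathbf p')$ --- is exactly the paper's strategy, and is correct in outline. But your sketch glosses over the one nontrivial point. There is no clean ``annihilator-type'' bijection between subspaces $V\le H$ and $W\le H^\perp$ carrying a dimension identity for the projections; what the paper does instead is, given $W\le H^\perp$, set
\[
V=(\pi_1 W)^\perp\times\cdots\times(\pi_m W)^\perp\cap H
\]
and prove the two \emph{inequalities} $\dim\pi_j V\le\dim(\pi_j W)^\perp$ and $\dim V\ge\sum_j\dim(\pi_j W)^\perp-\dim W^\perp$ (the latter via $U:=\prod_j(\pi_j W)^\perp\subset W^\perp\oplus H$). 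Feeding these into the dimension condition for $(H,\mathbf p)$ yields the one for $(H^\perp,\mathbf p')$. So your first paragraph would become a proof once this construction and the two inequalities are supplied; as written it is only a plan.

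Your second approach, which you call the cleaner route, has a genuine gap that cannot be repaired. There is no ``reverse Hausdorff--Young'' inequality $\|\widehat f\|_{p'}\lesssim\|f\|_p$ for $p>2$, even under the restriction $\widehat f\ge 0$. A concrete obstruction: on $\mathbb{R}$, take $\widehat f=\sum_{j=1}^J\psi(\cdot-2^j)$ with $\psi\ge 0$ a fixed smooth bump of small support, so $f(x)=\check\psi(x)\sum_{j=1}^J e^{2\pi i 2^j x}$. Then $\|\widehat f\|_{p'}\sim J^{1/p'}$, while the classical lacunary estimate gives $\|f\|_p\sim J^{1/2}$; for $p>2$ one has $1/p'>1/2$ and the ratio is unbounded in $J$. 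The reduction $f_j\mapsto f_j*\widetilde{f_j}$ only enforces $\widehat{f_j}\ge 0$ and so does not help. In short, the Fourier-invariance identity \eqref{Finv} plus Hausdorff--Young cannot close the loop once some $p_j>2$; this is precisely why the paper abandons the analytic route (which would need Lieb's theorem) in favour of the purely linear-algebraic verification of the BCCT conditions.
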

We stress that Theorem \ref{hilbert}, which is a qualitative version of Theorem \ref{basicthm}, follows relatively quickly if one is prepared to appeal to Lieb's theorem, as is apparent on inspecting the short proof of Theorem \ref{basicthm} in \cite{BBBCF}.
Our elementary proof of Theorem \ref{hilbert} relies on a finiteness characterisation of the Brascamp--Lieb constant $\BL(H,\mathbf p)$ established by Carbery, Christ, Tao and the first author in \cite{BCCT1,BCCT2}; see also the earlier work of Carlen, Lieb and Loss \cite{CLL} in the so-called rank-1 case (where $\dim H_j=1$ for all $j$). This states that $\BL(H,\mathbf p)<\infty$ if and only if the scaling and dimension conditions
\begin{align}
\dim H &=\sum_{j=1}^m \frac{\dim H_j}{p_j},\label{scale}\\
\dim V &\le \sum_{j=1}^m \frac{\dim(\pi_j V)}{p_j}\label{dim}
\end{align}
hold for all subspaces $V$ of $H$.
Here $\pi_j:H_1\times\cdots\times H_m\rightarrow H_j$ is the $j$th coordinate projection.
Importantly, the proof of this finiteness criterion provided in \cite{BCCT2} is elementary, involving only linear algebraic identities and induction on the dimension of $H$ -- in particular, it does not appeal to phenomena firmly tied to the underlying euclidean structure, such as Lieb's theorem.  
\begin{proof}[Proof of Theorem \ref{hilbert}] Let $n=\dim H$ and  $n_j =\dim H_j$ for  $1\le j\le m.$  Since $(H^\perp)^\perp=H$ and $(\mathbf{p}')'=\mathbf{p}$, it is enough to prove that $(H^\perp,\mathbf p')$ obeys the scaling and dimension conditions \eqref{scale} and \eqref{dim} under the assumption that $\BL(H, \mathbf p)<\infty$.
The scaling condition \eqref{scale} for $(H^\perp ,\mathbf p')$ is immediate from that for $(H,\mathbf{p})$ since
\begin{equation}\label{scaleperp}\dim H^\perp =\sum_{j=1}^m n_j -\dim H  =\sum_{j=1}^m \Big(n_j -\frac{n_j}{p_j}\Big)=\sum_{j=1}^m \frac{n_j}{p_j'}. \end{equation}
In order to establish \eqref{dim} for $(H^\perp,\mathbf p')$, 
let $W$ be an arbitrary subspace of $H^\perp$. By \eqref{scaleperp}, the dimension condition \eqref{dim} for the datum $(H^\perp,\mathbf p')$ is equivalent to 
\begin{equation}\label{red}
\sum_{j=1}^m \dim (\pi_j W)^\perp -\dim W^\perp \le \sum_{j=1}^m \frac{\dim (\pi_j W)^\perp}{p_j}.
\end{equation}
Here $\pi_j=\pi_j|_{H^\perp}$, and $W^\perp$ and $(\pi_j W)^\perp$ are the orthogonal complements of $W$ and $\pi_j W$, respectively; that is, $W\oplus W^\perp =H^\perp$ and $(\pi_j W)\oplus (\pi_j W)^\perp =H_j$. To conclude it therefore suffices to construct a subspace $V$ of $H$ satisfying
\begin{align}
\dim V &\ge \,\sum_{j=1}^m \dim (\pi_j W)^\perp -\dim W^\perp, \label{glo}\\
\dim (\pi_j V )&\le\, \dim (\pi_j W)^\perp, \quad j=1,\hdots,m, \label{loc}
\end{align}
since $(H,\mathbf p)$ obeys \eqref{dim} by the finiteness of $\BL(H,\mathbf p)$.
These considerations naturally prompt the choice 
\[V =(\pi_1 W)^\perp \times \cdots \times (\pi_m W)^\perp \cap H.\]
Clearly  $V$ is  a subspace of $H$ and satisfies the relation \eqref{loc}. It is also straightforward to show that $V$ satisfies \eqref{glo}, since  $U:= (\pi_1 W)^\perp \times \cdots \times (\pi_m W)^\perp $ is a subspace of $W^\perp \oplus H$, the orthogonal complement of $W$ in the whole space $\widetilde H := H_1\times  \cdots\times H_m$.
Indeed, for $u\in U$ and $w\in W$,
$ \langle u, w\rangle_{\widetilde H} =\sum_{j=1}^m \langle \pi_ju, \pi_jw\rangle_{H_j} =0,
$
since $\pi_j u\in (\pi_j W)^\perp$.  Thus we have
\begin{align*}
\dim V 
&= \dim U +\dim H -\dim (U+H)\\
&\ge \dim U +\dim H -\dim W^\perp -\dim H\\
&=\sum_{j=1}^m \dim (\pi_j W)^\perp -\dim W^\perp,
\end{align*} 
as claimed. Here the inequality is due to the elementary fact that
$U+H \le W^\perp \oplus H$.
\end{proof}
There are other structural properties of Brascamp--Lieb data that are natural to investigate under the duality map \eqref{dualitymap}. For example,
a datum  $(H,\mathbf p)$ is referred to as \emph{simple} if there is no non-trivial proper subspace $V$ of $H$ such that \eqref{dim} holds with equality. If $(H,\mathbf p)$ is simple, then it was established in \cite{BCCT1} (again, see also \cite{CLL}) that it has a unique gaussian extremiser (modulo elementary scalings), provided all of $p_j$ are finite.
\begin{proposition}\label{simp} If $\mathbf{p}\in (1,\infty)^m$ then
simplicity is preserved under the duality map,  that is, $(H,\mathbf{p})$ is simple if and only if $(H^\perp,\mathbf{p}')$ is simple.
\end{proposition}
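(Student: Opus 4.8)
The plan is to re-run the linear-algebra argument behind Theorem~\ref{hilbert}, this time keeping track of the case of equality. We restrict attention to feasible data (simplicity being of interest only in that case), so we may assume $\BL(H,\mathbf p)<\infty$, and hence $\BL(H^\perp,\mathbf p')<\infty$ by Theorem~\ref{hilbert}, so that \eqref{scale} and \eqref{dim} hold for both $(H,\mathbf p)$ and $(H^\perp,\mathbf p')$. Since $(H^\perp)^\perp=H$, $(\mathbf p')'=\mathbf p$, and the hypothesis $\mathbf p\in(1,\infty)^m$ is invariant under $\mathbf p\mapsto\mathbf p'$, it then suffices to prove one implication: that if $(H^\perp,\mathbf p')$ admits a non-trivial proper subspace at which \eqref{dim} holds with equality, then so does $(H,\mathbf p)$. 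I would also record at the outset, by applying \eqref{dim} with $V=H$ and comparing with the scaling identity \eqref{scale} (which uses $p_j<\infty$), that every coordinate projection maps $H$ onto $H_j$; this fact is needed at the end.

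Now let $W$ be a non-trivial proper subspace of $H^\perp$ at which \eqref{dim} holds with equality for $(H^\perp,\mathbf p')$. As shown within the proof of Theorem~\ref{hilbert}, this condition is equivalent to \eqref{red}, and since that equivalence is an affine rearrangement it carries equality to equality; so $W$ gives equality in \eqref{red}. I would then take the very same auxiliary subspace
\[
V:=\big((\pi_1W)^\perp\times\cdots\times(\pi_mW)^\perp\big)\cap H
\]
used in that proof, which satisfies \eqref{glo} and \eqref{loc}. Writing $a:=\sum_{j=1}^m\dim(\pi_jW)^\perp-\dim W^\perp$ for the left-hand side of \eqref{red}, and inserting in turn \eqref{glo}, the dimension condition \eqref{dim} for $(H,\mathbf p)$, \eqref{loc}, and finally the equality in \eqref{red}, one gets
\[
a\le\dim V\le\sum_{j=1}^m\frac{\dim(\pi_jV)}{p_j}\le\sum_{j=1}^m\frac{\dim(\pi_jW)^\perp}{p_j}=a ,
\]
so all four quantities coincide. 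In particular $\dim V=\sum_{j=1}^m\dim(\pi_jV)/p_j$, i.e.\ \eqref{dim} holds with equality at $V$ for $(H,\mathbf p)$.

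The remaining step, which I expect to be the main obstacle since it is the only point genuinely beyond Theorem~\ref{hilbert}, is to check that $V$ is non-trivial and proper, i.e.\ that the construction has not collapsed. For non-triviality: from the display, $\dim V=a=\sum_{j=1}^m\dim(\pi_jW)^\perp/p_j\ge 0$; if $\dim V=0$ then, as all $p_j$ are finite, every $(\pi_jW)^\perp$ is trivial, whence $a=-\dim W^\perp=0$ and $W=H^\perp$, contradicting the properness of $W$. For properness: if $V=H$ then $\pi_jH\subseteq(\pi_jW)^\perp$ for each $j$, so by the surjectivity recorded above $(\pi_jW)^\perp=H_j$, i.e.\ $\pi_jW=\{0\}$, for every $j$, and hence $W=\{0\}$, contradicting the non-triviality of $W$. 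Thus $V$ is a non-trivial proper subspace of $H$ at which \eqref{dim} holds with equality, so $(H,\mathbf p)$ is not simple; this proves the implication. The argument is short once Theorem~\ref{hilbert} is in hand --- the choice of $V$ is forced and the equality analysis is automatic --- with essentially all the content sitting in this non-degeneracy check, and it is precisely there that the hypothesis enters, through the finiteness of the $p_j$ (and, for the symmetric implication, of the $p_j'$, that is $p_j>1$), accounting for the range $\mathbf p\in(1,\infty)^m$.
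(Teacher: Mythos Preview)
Your proof is correct and follows essentially the same approach as the paper's: both use the auxiliary subspace $V=\big((\pi_1W)^\perp\times\cdots\times(\pi_mW)^\perp\big)\cap H$ from Theorem~\ref{hilbert}, both rely on the surjectivity of $\pi_j|_H$ (deduced from \eqref{scale}, \eqref{dim}, and $p_j<\infty$) to rule out $V=H$, and both use finiteness of the $p_j$ to rule out $V=\{0\}$. The only difference is cosmetic: the paper argues directly (assuming $(H,\mathbf p)$ simple and deriving strict inequality in \eqref{red} for every non-trivial proper $W$, with a small case analysis when $V$ degenerates), whereas you argue by contrapositive, which lets the chain $a\le\dim V\le\sum_j\dim(\pi_jV)/p_j\le\sum_j\dim(\pi_jW)^\perp/p_j=a$ collapse cleanly to equalities and makes the non-degeneracy check slightly more streamlined.
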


\begin{proof}
Assume that $(H, \mathbf p)$ is simple. From the proof of Theorem \ref{hilbert}, it is enough to show that for any non-zero proper subspace $W$ of $H^\perp$ there is strict inequality in \eqref{red}; that is,
\begin{equation}\label{s_red}
\sum_{j=1}^m \dim (\pi_j W)^\perp -\dim W^\perp < \sum_{j=1}^m\frac{\dim (\pi_j W)^\perp}{p_j}.
\end{equation}
Now, we have already shown that there is a subspace $V$ of $H$ such that \eqref{glo} and \eqref{loc} hold, and so if this subspace is non-zero and proper, then \eqref{s_red} is evident from the simplicity of $(H, \mathbf p)$. Therefore it is enough to deal with the hypothetical possibilities that $V=\{0\}$ or $H$.
For the case $V=\{0\}$ we consider a dichotomy.
If $\dim (\pi_j V) < \dim (\pi_j W)^\perp$ for some $j$, then \eqref{s_red} is immediate as $1/p_j\neq 0$.
Alternatively, if $\dim (\pi_j V ) = \dim (\pi_j W)^\perp$ for all $j$,  then the right-hand side of \eqref{glo} equals $-\dim W^\perp$, which is nonzero by assumption. Thus equality  in \eqref{glo} is impossible, which yields  \eqref{s_red}.
For the case $V=H$, by \eqref{loc} and the surjectivity of $\pi_j|_{H}$  which follows from \eqref{scale} and \eqref{dim}, we have
\[ n_j =\dim (\pi_j V) \le \dim (\pi_j W)^\perp \le n_j,\]
and $\pi_j W=\{0\}$ for all $j$. This contradicts the assumption that  $W \neq \{0\}$, and so $V=H$ cannot occur.
\end{proof}

\subsubsection*{Remark} There are Brascamp--Lieb data  $(H,\mathbf p)$ with $p_j=\infty$ for some $j$ such that $(H,\mathbf p)$ is simple but $(H^\perp,\mathbf p')$ is not. For example if $H=\{(x,0, -x): x\in \R\}\subset \R^2\times \R$, $p_1=\infty$ and $p_2=1$. Then $(H,\mathbf p)$ is simple and its dual data $(H^\perp,\mathbf p')$ is given by
$ H^\perp = \langle\{(1,0,1), (0,1,0)\}\rangle \subset \R^2\times \R.$
Note that for $W= \langle\{(1,0,1)\}\rangle$ it follows that $\dim W =\dim \pi_1(W)/p_1' +\dim \pi_2(W)/p_2'$, and hence $(H^\perp,\mathbf p')$ is not simple.

Of course statements about the duality map \eqref{dualitymap} involving gaussian-extremisability, or gaussian near-extremisability (such as Lieb's theorem itself) are strongly tied to the  context of euclidean spaces, and so are less relevant for the purposes of this paper. However, for completeness we provide such a statement whose proof, which appeals to Lieb's theorem, is implicit in \cite{BBBCF}.
\begin{proposition}\label{extpro}
If $(f_j)$ is a gaussian extremiser for $(H,\mathbf{p})$ then $(\widehat{f}_j)$ is a gaussian extremiser for $(H^\perp,\mathbf{p}')$.
\end{proposition}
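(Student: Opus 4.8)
The plan is to combine three ingredients: the Fourier-invariance identity \eqref{Finv}, the sharpness of the forward and reverse Hausdorff--Young inequalities for gaussians, and Theorem \ref{basicthm}. First I would feed the given extremising tuple $(f_j)$ into \eqref{Finv} and note that each $\widehat f_j$ is again a gaussian, since the euclidean Fourier transform maps gaussians to gaussians; it therefore remains only to show that $(\widehat f_j)$ genuinely \emph{extremises} the Brascamp--Lieb inequality \eqref{BL} attached to the dual datum $(H^\perp,\mathbf p')$.

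The key observation is that a gaussian is a (forward or reverse) Hausdorff--Young extremiser. Since $f_j$ is a gaussian, equality holds in the sharp $n_j$-dimensional Hausdorff--Young inequality of \cite{Beckner} when $p_j\le 2$, in its reverse form from \cite{BrascampLieb} when $p_j\ge 2$, and, via Plancherel's theorem, when $p_j=2$, so that in every case
\[ \|\widehat f_j\|_{L^{p_j'}(H_j)}=A_{p_j}^{n_j/2}\|f_j\|_{L^{p_j}(H_j)},\qquad j=1,\hdots,m, \]
where $A_{p_j}^{n_j/2}=(p_j^{1/p_j}/(p_j')^{1/p_j'})^{n_j/2}$ is the sharp Hausdorff--Young constant. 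Multiplying over $j$ and comparing with \eqref{becks} gives $\prod_j\|\widehat f_j\|_{L^{p_j'}(H_j)}=B_{\mathbf p}\prod_j\|f_j\|_{L^{p_j}(H_j)}$. Combining this with \eqref{Finv}, the hypothesis that $(f_j)$ extremises \eqref{BL} for $(H,\mathbf p)$, and Theorem \ref{basicthm}, I would then conclude
\[ \frac{\left|\int_{H^\perp}\widehat f_1\otimes\cdots\otimes\widehat f_m\right|}{\prod_{j=1}^m\|\widehat f_j\|_{L^{p_j'}(H_j)}}=\frac{\left|\int_H f_1\otimes\cdots\otimes f_m\right|}{B_{\mathbf p}\prod_{j=1}^m\|f_j\|_{L^{p_j}(H_j)}}=\frac{\BL(H,\mathbf p)}{B_{\mathbf p}}=\BL(H^\perp,\mathbf p'), \]
so that $(\widehat f_j)$ attains the best constant for $(H^\perp,\mathbf p')$, which is exactly what is required. (Every quantity above is finite and non-zero, since gaussians have finite non-zero $L^p$ norms for all $p\in[1,\infty]$ and, by \eqref{Finv}, the two integrals coincide.)

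I do not expect a serious obstacle: the appeal to Lieb's theorem is entirely absorbed into Theorem \ref{basicthm}, which we take as given, and the remaining steps are a direct computation. The one point demanding a little care is the endpoint behaviour at $p_j\in\{1,\infty\}$ (together with the sup-versus-essential-sup convention flagged in the footnote to Theorem \ref{basicthm}), where the identity $\|\widehat f_j\|_{L^{p_j'}(H_j)}=A_{p_j}^{n_j/2}\|f_j\|_{L^{p_j}(H_j)}$ for a gaussian should be checked directly -- it reduces, up to a harmless modulation, to $\widehat{f_j}(0)=\int_{H_j}f_j$ -- and where, if non-centred gaussians are admitted, one should first reduce to the centred case by exploiting the translation, modulation and dilation symmetries common to the Hausdorff--Young and Brascamp--Lieb inequalities.
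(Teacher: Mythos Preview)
Your argument is correct and is precisely the approach the paper points to: the paper does not spell out a proof of Proposition~\ref{extpro} but says its proof ``appeals to Lieb's theorem'' and ``is implicit in \cite{BBBCF}'', i.e.\ it uses exactly the combination of \eqref{Finv}, the gaussian case of the sharp (forward/reverse) Hausdorff--Young identity, and Theorem~\ref{basicthm} that you have written down. Nothing further is needed.
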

It should be remarked that the Fourier transform does not in general map \emph{all} extremisers for $(H,\mathbf{p})$ to extremisers for $(H^\perp,\mathbf{p}')$. One simply has to look to Fubini's theorem for an example -- see Section \ref{Sect:examples}.

\section{Proof of Theorem \ref{dualitythm}}\label{Sect:discrete}
We begin by establishing suitable finiteness criteria for the Brascamp--Lieb constants  when the underlying groups are either finitely-generated or the duals of such. From there the elementary argument used in Section \ref{simple} may be adapted to first establish a qualitative form of Theorem \ref{dualitythm}, referring only to the invariance of \emph{finiteness} of the Brascamp--Lieb constant under the duality map (a discrete analogue of Theorem \ref{hilbert}). Upgrading this qualitative statement to Theorem \ref{dualitythm} then relies on an analogue of the Fourier-invariance property \eqref{Finv}
for LCA groups.

Throughout the section, the $G_j$ denote finitely-generated abelian groups, as in the statement of Theorem \ref{dualitythm}. Consequently, $G_j\cong \mathbb Z^{n_j}\times F_j$ and $\widehat{G}_j \cong \T^{n_j}\times  \widehat F_j$ for a nonnegative integer $n_j$ and a finite abelian group $F_j$. Here $\T^{n_j}\cong \R^{n_j}/\Z^{n_j}$ is the $n_j$-dimensional flat torus. If $H$ is a subgroup of $G_1\times \cdots \times G_m$, then $H$ is also isomorphic to $\mathbb Z^n\times F$ for some nonnegative integer $n$ and a finite abelian group $F$. We refer to $n$ as the rank of $H$. 
We denote by $\pi_j : H\to G_j$ the $j$th component map,\footnote{Strictly speaking this is the $j$th component map $\pi_j$ restricted to $H$. We gloss over this distinction for the sake of notational simplicity.}  and following the usual conventions in this context, we equip $G_j$ and $H$ with counting measures $\mu_{G_j}$ and $\mu_H$, respectively.

An appropriate finiteness criterion for $\BL_{\mathbf G}(H,\mathbf p)$ was established by Carbery, Christ, Tao and the first author in \cite{BCCT2}. 
Under this finiteness condition an explicit expression for $\BL_{\mathbf G}(H,\mathbf p)$ was then obtained by Christ, Demmel, Knight, Scanlon and Yelick  \cite{CDKSY} and Christ \cite{Ch1}. These are summarised in the following theorem.
\begin{theorem}[\cite{BCCT2}, \cite{CDKSY}, \cite{Ch1}]\label{discrete}
Let $H$ be a  subgroup of $G_1\times \cdots \times G_m$. For $1\le p_j\le \infty$, $\BL_{\mathbf G}(H,\mathbf p)$ is finite if and only if
\begin{equation}\label{rank}
\rank V\le \sum_{j=1}^m  p_j^{-1} \rank \pi_j(V)
\end{equation}
for every  subgroup $V$ of $H$.  In addition, if $\BL_{\mathbf G} (H,\mathbf p)$ is finite,
\begin{equation}\label{optimal}
 \BL_{\mathbf G} (H, \mathbf p) =\max_{V\leq H} |V| \prod_{j=1}^m |\pi_j (V)|^{-1/p_j},
 \end{equation}
where the maximum is taken over all finite subgroups $V$ of $H$.
 \end{theorem}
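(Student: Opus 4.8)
The plan is to reduce the theorem to its euclidean counterpart, namely the finiteness criterion of \cite{BCCT1,BCCT2} and the formula of \cite{CDKSY,Ch1}, by passing from the discrete group $H\le G_1\times\cdots\times G_m$ to the real vector space obtained by tensoring with $\R$. Concretely, writing $G_j\cong\Z^{n_j}\times F_j$ and $H\cong\Z^n\times F$, I would first dispose of the torsion: since $F,F_j$ are finite, a subgroup $V\le H$ has $\rank V=\rank(V/\mathrm{tors})$ and $\rank\pi_j(V)=\rank(\pi_j(V)/\mathrm{tors})$, so the rank condition \eqref{rank} is insensitive to torsion and is equivalent to the same condition for the torsion-free quotients. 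Similarly, in the formula \eqref{optimal} one observes that the maximising ratio $|V|\prod_j|\pi_j(V)|^{-1/p_j}$, as $V$ ranges over finite subgroups, can be analysed by separately tracking the rank-zero (finite) part and, after an approximation step, relating large finite subgroups to sublattices — this is where the interplay with the euclidean picture enters.

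The core of the argument is the \emph{linearisation}. Let $\widetilde H=H\otimes_\Z\R$, a euclidean space of dimension $n=\rank H$, sitting naturally inside $\widetilde G_j=(\Z^{n_j})\otimes_\Z\R=\R^{n_j}$ via the maps $\pi_j\otimes\mathrm{id}$. Every subgroup $V\le H$ spans a subspace $\widetilde V\le\widetilde H$ with $\dim\widetilde V=\rank V$ and $\dim(\pi_j\widetilde V)=\rank\pi_j(V)$; conversely every rational subspace of $\widetilde H$ arises this way, and since the rank/dimension conditions involve only finitely many linear-algebraic quantities, it suffices to test rational subspaces. Hence \eqref{rank} for all $V\le H$ is \emph{equivalent} to the euclidean dimension condition \eqref{dim} for all subspaces $V\le\widetilde H$, with exponents $\mathbf p$; together with the observation that the scaling condition \eqref{scale} for $(\widetilde H,\mathbf p)$ is exactly \eqref{rank} applied to $V=H$ read as an equality (one direction) combined with \eqref{rank} for a complementary space (the other) — actually one must be a little careful: \eqref{rank} is only an inequality, so I would instead invoke the known fact (from \cite{BCCT2}) that finiteness of $\BL_{\mathbf G}(H,\mathbf p)$ already forces \eqref{scale}-type equality for $V=H$, or simply cite that the finiteness half of the theorem is precisely the result proved in \cite{BCCT2}. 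Granting the finiteness criterion, the value formula \eqref{optimal} then follows by: (i) the upper bound, obtained by testing \eqref{BLabstract} on indicator functions $f_j=\chi_{\pi_j(V)}$ for a finite subgroup $V\le H$, which makes $\int_H f_1\otimes\cdots\otimes f_m\ge|V|$ while $\|f_j\|_{p_j}=|\pi_j(V)|^{1/p_j}$, so $\BL_{\mathbf G}(H,\mathbf p)\ge|V|\prod_j|\pi_j(V)|^{-1/p_j}$; and (ii) the matching reverse bound, which is the substantive input from \cite{CDKSY,Ch1} — one shows that among all admissible inputs, indicator functions of finite subgroups are extremal, via a tensor-power/combinatorial argument (a discrete substitute for Lieb's theorem, exploiting that on discrete groups the relevant optimisation is over a polytope whose vertices correspond to subgroups).

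The step I expect to be the main obstacle is (ii), the proof that the maximum in \eqref{optimal} is actually attained and equals the Brascamp--Lieb constant — i.e. that one need not consider inputs more general than subgroup indicators, and that the supremum over subgroups $V$ is finite (attained) exactly when \eqref{rank} holds. In the euclidean case this is handled by Lieb's theorem, which has no clean analogue here; instead the argument of Christ et al.\ proceeds by a discretisation/tensor-power trick together with entropy or a direct combinatorial counting estimate, and verifying that this machinery applies verbatim to arbitrary finitely-generated $\mathbf G$ (including the torsion factors $F_j$) is the delicate point. Everything else — the torsion reduction, the passage $H\rightsquigarrow H\otimes\R$, and the easy lower bound from subgroup indicators — is routine linear algebra and a direct computation. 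Since the theorem as stated is explicitly attributed to \cite{BCCT2,CDKSY,Ch1}, in the paper itself I would present the above as a recollection with precise references rather than reprove (ii) from scratch, using it as a black box for the subsequent duality argument.
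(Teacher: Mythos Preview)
The paper does not prove Theorem \ref{discrete}; it is stated with attribution to \cite{BCCT2}, \cite{CDKSY}, \cite{Ch1} and then used as a black box, exactly as you anticipate in your final paragraph. So there is no in-paper proof to compare your sketch against, and your decision to present it as a recollection with references matches what the paper actually does.

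One substantive remark on your sketch, since you may want it for your own understanding: the reduction of the finiteness criterion to the euclidean one via $H\otimes_{\Z}\R$ runs into precisely the difficulty you flag. The euclidean criterion of \cite{BCCT1,BCCT2} requires the scaling \emph{equality} \eqref{scale} together with the dimension inequalities \eqref{dim}, whereas the discrete criterion \eqref{rank} is purely an inequality --- there is no analogue of \eqref{scale} for counting measure, which does not scale. The correct euclidean object to compare with is the \emph{localised} Brascamp--Lieb constant of \cite[Theorem~2.2]{BCCT2}, which needs only the dimension inequalities; that is the route taken in \cite{BCCT2} for the finiteness half. (The paper itself alludes to this localised constant in the remark at the end of Section~\ref{pf}.) A minor slip: in your item (i) you write ``the upper bound'' but then, correctly, derive the \emph{lower} bound $\BL_{\mathbf G}(H,\mathbf p)\ge |V|\prod_j|\pi_j(V)|^{-1/p_j}$ by testing on subgroup indicators.
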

 Of course the finite subgroups in \eqref{optimal} are the subgroups of the torsion subgroup of $H$, and so if $H$ is torsion-free then $\BL_{\mathbf G}(H,\mathbf p)$ may only take the values $1$ and $\infty$.

We now turn our attention to a similar theorem in the context of the duals of finitely-generated abelian groups.
Let  $S$ be a  subgroup of $\widehat G_1\times\cdots \times \widehat G_m$ and let $\widehat\pi_j : S\to \widehat G_j$ be the $j$th component map. Again, as is standard in this context, we equip $S$ and $\widehat G_j$ with Haar measures $\mu_{S}$ and $\mu_{\widehat G_j}$, respectively, so that  $\mu_{S}(S)=\mu_{\widehat G_j}(\widehat G_j)=1$.
It is reasonable to expect that a sufficient condition for the finiteness of $\BL_{\widehat {\mathbf G}}(S,\mathbf p)$ is similar to that for the so-called \emph{localised Brascamp--Lieb constant} established in \cite[Theorem 2.2]{BCCT2} in the euclidean setting, and this is indeed the case. 
In what follows $\mathscr F:=S/ \fm(S)$ and $\mathscr F_j:=\widehat{G}_j/\fm(\widehat G_j)$ are the quotients of $S$ and $\widehat G_j$ by their maximal tori $\fm(S)$ and $\fm(\widehat G_j)$ respectively, and $\phi_j : \mathscr F \to\mathscr F_j$ is the homomorphism induced by $\widehat \pi_j:S\rightarrow\widehat{G}_j$ (that is, $\phi_j(x+\fm(S))=\widehat \pi_j(x) + \fm(\widehat G_j)$).  
\begin{theorem}\label{torus}
Let $S$ be a closed subgroup of $\widehat G_1\times \cdots \times \widehat G_m$.  Then for $1\le p_j\le \infty$, $\BL_{\widehat{\mathbf G}}(S,\mathbf p)$ is finite if and only if  for every closed subgroup $W$ of $S$
\begin{equation}\label{codim}
\codim_{S} (W)\ge \sum_{j=1}^m  p_j^{-1} \codim_{\widehat G_j} (\widehat \pi_j(W)).
\end{equation}
In addition, if $\BL_{\widehat{\mathbf G}}(S,\mathbf p)$ is finite, then 
\begin{equation}\label{optimal_t}
 \BL_{\widehat{\mathbf G}}(S, \mathbf p)= \max_{\Gamma \leq \mathscr F} |\mathscr F|^{-1}|\Gamma | \prod_{j=1}^m \Big(|\mathscr F_j|^{-1} |\phi_j(\Gamma)|\Big)^{-1/p_j}\end{equation}
where the maximum is taken over all subgroups $\Gamma$ of $\mathscr F$. 
\end{theorem}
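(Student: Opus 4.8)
The plan is to bracket $\BL_{\widehat{\mathbf G}}(S,\mathbf p)$ between a lower bound obtained by testing on simple functions and an upper bound obtained by unpacking the maximal-torus structure of $S$, the decisive auxiliary input being the case in which $S$ is connected. First I would record the relevant reductions. Write $\widehat G_j\cong\T^{n_j}\times\widehat F_j$, so the identity component is $T_j:=\fm(\widehat G_j)\cong\T^{n_j}$ and $\mathscr F_j=\widehat G_j/T_j$ is finite; likewise $S$ is compact with identity component $\fm(S)$ and finite quotient $\mathscr F$. Since a continuous surjective homomorphism carries identity components onto identity components, $\widehat\pi_j(\fm(S))=\fm(\widehat\pi_j S)$, and for any closed $W\le S$ with identity component $W^\circ\le\fm(S)$ one has $\codim_S(W)=\codim_{\fm(S)}(W^\circ)$ and $\codim_{\widehat G_j}(\widehat\pi_j W)=\codim_{T_j}(\widehat\pi_j(W^\circ))$. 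Hence \eqref{codim} for all closed $W\le S$ is equivalent to the same inequality restricted to connected subgroups of $\fm(S)$, i.e. to the \emph{subtorus conditions} for $\fm(S)\le T_1\times\cdots\times T_m$. Necessity of \eqref{codim} is the usual concentration heuristic: if it fails for some $W$, test on $f_j=\chi_{N_\varepsilon(\widehat\pi_j W)}$, the indicator of the $\varepsilon$-neighbourhood of $\widehat\pi_j(W)$ in $\widehat G_j$, so that $\|f_j\|_{L^{p_j}}^{p_j}\sim\varepsilon^{\codim_{\widehat G_j}\widehat\pi_j W}$ while $\int_S f_1\otimes\cdots\otimes f_m\gtrsim\varepsilon^{\codim_S W}$, forcing the ratio to blow up as $\varepsilon\to0$.

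For the lower bound on the constant, given $\Gamma\le\mathscr F$ I would let $K_j\le\widehat G_j$ be the preimage of $\phi_j(\Gamma)$ (so $T_j\le K_j$, $K_j/T_j=\phi_j(\Gamma)$) and test on $f_j=\chi_{K_j}$: as these are constant on $T_j$-cosets one gets $\int_S f_1\otimes\cdots\otimes f_m\ge|\mathscr F|^{-1}|\Gamma|$ while $\|f_j\|_{L^{p_j}(\widehat G_j)}=(|\mathscr F_j|^{-1}|\phi_j(\Gamma)|)^{1/p_j}$, and maximising over $\Gamma$ gives ``$\ge$'' in \eqref{optimal_t}. For the upper bound, assume \eqref{codim}, hence the subtorus conditions. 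Decomposing $S$ into its $|\mathscr F|$ cosets of $\fm(S)$ and each $\widehat G_j$ into its $|\mathscr F_j|$ cosets of $T_j$ yields
\[
\int_S f_1\otimes\cdots\otimes f_m=\frac1{|\mathscr F|}\sum_{\gamma\in\mathscr F}\int_{\fm(S)}\prod_{j=1}^m f_{j,\phi_j(\gamma)}\bigl(u_{j,\gamma}+\widehat\pi_j(t)\bigr)\,d\nu(t),
\]
where $f_{j,\beta}$ is the restriction of $f_j$ to the $\beta$-coset (transported to $T_j$), $\nu$ is normalised Haar on $\fm(S)$, and $u_{j,\gamma}\in T_j$. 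I would then apply the triangle inequality in $\gamma$, the connected case below (constant $1$) to each $\int_{\fm(S)}$, and finally, recognising $\sum_{\gamma\in\mathscr F}\prod_j\|f_{j,\phi_j(\gamma)}\|_{L^{p_j}(T_j)}$ as $|\ker\Phi|$ times a Brascamp--Lieb form for the diagonal homomorphism $\Phi=(\phi_j):\mathscr F\to\prod_j\mathscr F_j$ on finite groups, invoke Theorem \ref{discrete} (on finite groups all rank conditions are vacuous, so this constant is always finite). After tracking the normalisations and using $|\Phi^{-1}(V)|=|\ker\Phi||V|$, $\phi_j(\Phi^{-1}(V))=\pi_j(V)$, the resulting upper bound for $\BL_{\widehat{\mathbf G}}(S,\mathbf p)$ coincides with the right-hand side of \eqref{optimal_t}; with the lower bound this proves \eqref{optimal_t} and, in particular, finiteness whenever \eqref{codim} holds — which together with the previous paragraph settles the finiteness criterion.

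The load-bearing ingredient, and the step I expect to be the main obstacle, is the \emph{connected case}: if $R$ is a connected closed subgroup of a product of tori $T_1\times\cdots\times T_m$, then (all Haar measures normalised to mass $1$) its Brascamp--Lieb constant $\BL_{\mathbf T}(R,\mathbf p)$ equals $1$ when the associated codimension conditions hold and $\infty$ otherwise. Necessity is again the concentration argument, and finiteness under the conditions follows either by transference to the euclidean \emph{localised} Brascamp--Lieb inequality of \cite[Theorem 2.2]{BCCT2} (restrict to a period cell and cut the $f_j$ off to the relevant bounded set), or from the qualitative duality \eqref{conj} for finitely-generated groups together with Theorem \ref{discrete}. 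The delicate point is that the constant is \emph{exactly} $1$. For this I would use the Fourier-invariance property \eqref{poisson}, $\int_R f_1\otimes\cdots\otimes f_m=\int_{R^\perp}\widehat f_1\otimes\cdots\otimes\widehat f_m$, where $R^\perp\le\Z^{n_1}\times\cdots\times\Z^{n_m}$ is a primitive, torsion-free lattice; Hausdorff--Young on compact groups (constant $1$) gives $\|\widehat f_j\|_{\ell^{p_j'}}\le\|f_j\|_{L^{p_j}}$ for $p_j\le2$, whence $\BL_{\mathbf T}(R,\mathbf p)\le\BL_{\widehat{\mathbf G}}(R^\perp,\mathbf p')$, which is $1$ (torsion-free case of Theorem \ref{discrete}) or $\infty$; since also $\BL_{\mathbf T}(R,\mathbf p)\ge1$, this pins the value to $1$ in the range $p_j\le2$, and the remaining exponents are reached by multilinear interpolation over the polyhedral region of finiteness (or by slicing off one torus factor and inducting on $\dim R$). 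The reason this needs care is precisely the absence of a usable analogue of Lieb's theorem here: getting the constant down to $1$, rather than to some unspecified finite value, rests on the fact that compact abelian groups admit no Beckner-type improvement of Hausdorff--Young.
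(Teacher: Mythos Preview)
Your overall architecture matches the paper's: the coset decomposition of $S$ over its maximal torus, the application of the connected (pure torus) case with constant $1$ to each coset, the subsequent bound on the sum over $\mathscr F$ via Theorem~\ref{discrete} on finite groups, together with the Knapp necessity argument and the lower bound from subgroup indicators, are exactly what the paper does in Section~\ref{pf}. The divergence, and the problem, lies entirely in how you handle the connected case.

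First, there is a circularity. To conclude $\BL_{\mathbf T}(R,\mathbf p)=1$ from Hausdorff--Young and \eqref{poisson} you need $\BL(R^\perp,\mathbf p')=1$, i.e.\ the rank condition \eqref{rank} for $(R^\perp,\mathbf p')$. You propose to obtain this (or finiteness) from ``the qualitative duality \eqref{conj} for finitely-generated groups'', but in this paper that duality is Theorem~\ref{dualitythm}, and its proof in Section~\ref{Sect:discrete} \emph{uses} the finiteness criterion of Theorem~\ref{torus}. The purely algebraic implication (codimension condition for $(R,\mathbf p)$ $\Rightarrow$ rank condition for $(R^\perp,\mathbf p')$) can indeed be isolated and proved with Lemmas~\ref{rank_dim} and~\ref{trivial} alone, but you would need to do that first; citing \eqref{conj} as stated is circular.

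Second, and more seriously, the Hausdorff--Young step only yields constant $1$ on the region $\{t_j\ge\tfrac12\}$, and multilinear interpolation from that region together with the faces $\{t_j=0\}$ does \emph{not} in general recover the full polytope $\mathcal C$. A concrete obstruction is the Loomis--Whitney datum $R=\T^n$, $T_j=\T^{n-1}$, $\pi_j$ the $j$th coordinate projection, with $n\ge4$: the hyperplane constraints $\sum_{k\neq j}t_k\le1$ force $\mathcal C\cap\{\forall j,\ t_j\ge\tfrac12\}=\varnothing$, while the symmetric extreme point $t_j=\tfrac1{n-1}<\tfrac12$ has $\sum_j t_j=\tfrac{n}{n-1}>1$ and hence is not a convex combination of points on any face $\{t_j=0\}$ of $\mathcal C$ (each such face satisfies $\sum_j t_j\le1$). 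At such extreme points there is a proper critical subtorus, and the paper handles them by the quotient--integral--formula splitting and induction on $\dim R$, combined with interpolation over the remaining extreme points (those with some $t_j=0$, or $m=1$). That induction is the entire content of the connected case; your parenthetical ``or by slicing off one torus factor and inducting on $\dim R$'' gestures at it, but the Hausdorff--Young shortcut does not substitute for it.
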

The expressions \eqref{optimal} and \eqref{optimal_t} are of course structurally very similar. We clarify that the cardinalities in \eqref{optimal_t} are the Haar measures of subsets of the finite groups $\mathscr{F}$ and $\mathscr{F}_j$ (thought of as compact groups and so given total mass $1$), while the cardinalities in \eqref{optimal} are Haar measures of the appropriate finite groups (thought of instead as discrete groups, and so assigned counting measures).

Theorem \ref{torus} complements work of Bramati \cite{Brami}, who studied the Brascamp--Lieb inequality on tori as a special case of the Brascamp--Lieb inequality on compact homogeneous spaces.
Postponing the proof of Theorem \ref{torus} until Section \ref{pf},
it remains to use Theorems \ref{discrete} and \ref{torus}  to prove Theorem \ref{dualitythm}.  We begin by establishing some elementary lemmas.
\begin{lemma}\label{rank_dim} Let $G$ be a finitely-generated abelian group.
\begin{itemize}
\item[(i)] If $V$ is a subgroup of $G$ then
\begin{equation}\label{rd}
 \dim V^\perp =\rank G -\rank V;
 \end{equation}
\item[(ii)] if $W$ is a subgroup of $\widehat G$, the dual group of $G$, then
\begin{equation}\label{dr}
\rank W^\perp \le \dim \widehat{G} -\dim W,
\end{equation}
with equality if $W$ is closed.
\end{itemize}
\end{lemma}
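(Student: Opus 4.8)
The plan is to reduce everything to the structure theorem $G \cong \Z^n \times F$ with $F$ finite, together with the basic duality $\widehat G \cong \T^n \times \widehat F$, and to recall that for a closed subgroup $A$ of an LCA group $B$ the annihilator satisfies $\widehat{B/A} \cong A^\perp$ and $\widehat{A} \cong \widehat B / A^\perp$; in particular $(A^\perp)^\perp = A$ when $A$ is closed, and $\dim$ (as a Lie group / topological dimension, which for these groups is the rank of the connected component, i.e. of the maximal torus) and $\rank$ behave additively in short exact sequences. For part (i): since $G$ is discrete, every subgroup $V$ is closed, so $V^\perp = \widehat{G/V}$, and $G/V$ is again a finitely-generated abelian group of rank $\rank G - \rank V$ (the rank is additive on the short exact sequence $0\to V\to G\to G/V\to 0$ because it equals the $\Q$-dimension of $-\otimes\Q$). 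The dual of a finitely-generated group of rank $r$ is a compact group whose connected component is a torus of dimension $r$, so $\dim V^\perp = \operatorname{rank}(G/V) = \rank G - \rank V$, which is \eqref{rd}.

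For part (ii): here $W$ is merely a (possibly non-closed) subgroup of the compact group $\widehat G$. First handle the closed case. If $W$ is closed then $W^\perp = \widehat{\widehat G / W}$ is discrete, hence a finitely-generated abelian group (a closed subgroup of the discrete group $G$), and by the same additivity of rank applied to $0 \to W \to \widehat G \to \widehat G/W \to 0$ — now using that for a compact abelian Lie-type group the relevant additive invariant is the torus dimension, and $\dim \widehat G = \dim W + \dim(\widehat G/W)$ — we get $\rank W^\perp = \rank(\widehat G/W)^{\widehat{\ }}\!$. Since the dual of a discrete group of rank $s$ is compact of dimension $s$, and $\widehat{\widehat G/W}$ has rank equal to $\dim(\widehat G/W) = \dim\widehat G - \dim W$, equality holds: $\rank W^\perp = \dim \widehat G - \dim W$. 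For a general (not necessarily closed) subgroup $W$, pass to its closure $\overline W$: annihilators only see the closure, so $W^\perp = \overline W^\perp$, while $\dim \overline W \ge \dim W$ (the closure is larger, but topological dimension can only increase or stay the same — in fact one should note $\dim \overline W = \dim W$ if $W$ contains the identity component, but in general only $\dim\overline{W}\ge \dim W$ is needed). Applying the closed case to $\overline W$ gives $\rank W^\perp = \dim\widehat G - \dim\overline W \le \dim\widehat G - \dim W$, which is \eqref{dr}, with equality precisely when $W$ is closed.

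The main obstacle I anticipate is bookkeeping around the two different additive invariants — $\rank$ for discrete (finitely-generated) groups and $\dim$ (= dimension of the maximal torus) for compact groups — and verifying that Pontryagin duality interchanges them, i.e. $\dim \widehat A = \rank A$ for $A$ finitely-generated discrete and $\rank \widehat B = \dim B$ for $B$ compact of the relevant type, together with exactness of both invariants on short exact sequences. None of this is deep, but it needs the structure theorem and the compatibility of duality with quotients/subgroups stated cleanly; once those are in place both parts are immediate, and the only genuinely asymmetric point — why (i) is an equality but (ii) only an inequality in general — is exactly the gap $\dim \overline W \ge \dim W$ caused by non-closed subgroups of the compact group $\widehat G$.
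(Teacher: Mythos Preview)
Your proposal is correct and follows essentially the same route as the paper: both arguments rest on the annihilator/quotient duality ($V^\perp \cong \widehat{G/V}$, respectively $\widehat V \cong \widehat G/V^\perp$), the correspondence $\rank \leftrightarrow \dim$ under Pontryagin duality, and passage to the closure $\overline W$ for the non-closed case. The only cosmetic difference is that for the closed case of (ii) the paper applies (i) directly to $W^\perp$ using $(W^\perp)^\perp = W$, which is a line shorter than your re-derivation via $W^\perp \cong \widehat{\widehat G/W}$ and additivity of $\dim$ on $0\to W\to\widehat G\to\widehat G/W\to 0$.
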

\begin{proof}
Beginning with (i), since $V$ is a closed subgroup of $G$, we have $\widehat V\cong \widehat G/V^\perp$. 
Hence by the quotient manifold theorem \cite[Theorem 7.10]{JLee},
\[ \rank V=\dim\widehat V= \dim \widehat G-\dim V^\perp= \rank G-\dim V^\perp. \]
Statement (ii) follows from (i) and the Pontryagin duality theorem.
Indeed, for any closed subgroup $W$ of $\widehat {G}$, equality in \eqref{dr} follows from \eqref{rd} and $(W^\perp)^\perp =W$.  In general, the inequality \eqref{dr} follows by applying this identity to $\overline W$, the closure of $W$.
\end{proof}
\begin{lemma}\label{trivial} Let $G_j$ be a LCA group for each $1\le j\le m$. For any subgroup $V$ of $G_1\times \cdots \times G_m$,
\[ (\pi_1 V)^\perp \times \cdots \times (\pi_m V)^\perp \subset V^\perp.\]
Here, as usual, $\pi_j$ is the projection of $V$ onto the $j$th component $G_j$. 
\end{lemma}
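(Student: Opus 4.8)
The plan is simply to unwind the definitions of the annihilators involved and verify the stated containment pointwise. Recall the standard identification of the Pontryagin dual of $G_1\times\cdots\times G_m$ with $\widehat{G}_1\times\cdots\times\widehat{G}_m$, under which a character $\chi=(\chi_1,\dots,\chi_m)$ acts on $G_1\times\cdots\times G_m$ by $\chi(x_1,\dots,x_m)=\prod_{j=1}^m\chi_j(x_j)$. Under this identification, $V^\perp$ consists precisely of those $\chi$ with $\prod_{j=1}^m\chi_j(x_j)=1$ for all $(x_1,\dots,x_m)\in V$, and $(\pi_j V)^\perp$ is the annihilator in $\widehat{G}_j$ of the subgroup $\pi_j V\leq G_j$.

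First I would take an arbitrary element $\chi=(\chi_1,\dots,\chi_m)$ of the left-hand side, so that $\chi_j\in(\pi_j V)^\perp$ for each $j$; by definition this means $\chi_j(\pi_j v)=1$ for every $v\in V$. Then, for any $v=(x_1,\dots,x_m)\in V$ we have $x_j=\pi_j v$, whence
\[ \chi(v)=\prod_{j=1}^m\chi_j(x_j)=\prod_{j=1}^m\chi_j(\pi_j v)=1, \]
so $\chi\in V^\perp$. This establishes the claimed inclusion.

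There is no genuine obstacle here: the argument is a direct computation, and the only point worth recording is that $(\pi_j V)^\perp$ is by definition the annihilator of the subgroup $\pi_j V$ (equivalently of its closure $\overline{\pi_j V}$, since a character annihilating a set annihilates the closed subgroup it generates), so no topological subtlety enters. The lemma is the LCA-group counterpart of the elementary linear-algebraic fact used in the proof of Theorem~\ref{hilbert}, namely that $(\pi_1 W)^\perp\times\cdots\times(\pi_m W)^\perp$ sits inside the orthogonal complement of $W$; one should not expect equality in general.
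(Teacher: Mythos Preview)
Your proof is correct and essentially identical to the paper's own argument: both take an arbitrary $\chi=(\chi_1,\dots,\chi_m)$ in the product of annihilators, observe that each $\chi_j$ kills $\pi_j(V)$, and conclude $\chi(v)=\prod_j\chi_j(\pi_j v)=1$ for all $v\in V$. Your additional remarks on closures and the Euclidean analogue are fine but not needed for the lemma as stated.
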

\begin{proof}  Set $G=G_1\times \cdots \times G_m$, and for $x\in G$ and $\gamma \in \widehat G$ write
$x=(x_1,\cdots, x_m)$  and $\gamma =(\gamma_1,\cdots, \gamma_m)$ with  $x_j\in G_j$ and $\gamma\in \widehat G_j$. If $\gamma \in (\pi_1 V)^\perp \times  \cdots \times (\pi_m V)^\perp$ then
\[ \gamma(x) =\prod_{j=1}^m\gamma_j(x_j) =1 \quad \text{for any $x=(x_1,\cdots,x_m)\in V$},\]
since $\gamma_j(x_j)=1$ for any $x_j\in \pi_j(V)$. 
Hence $\gamma\in V^\perp$.
\end{proof}

\begin{proof}[Proof of Theorem \ref{dualitythm}]
We first verify that 
\begin{equation}\label{qual}
\BL_{\mathbf G}(H, \mathbf p)<\infty\implies \BL_{\widehat{\mathbf G}}(H^\perp, \mathbf p')<\infty.
\end{equation} 
By Theorem \ref{torus} it is enough to prove \eqref{codim} for any closed subgroup $W$ of $H^\perp$; equivalently
\begin{equation}\label{eqdim}
\sum_{j=1}^m   p_j^{-1} \codim_{\widehat{G}_j} \widehat \pi_j(W) \ge  \sum_{j=1}^m   \codim_{\widehat{G}_j} \widehat \pi_j(W) -\codim_{H^\perp} W.
\end{equation}
For such a $W$ we set
\[V =(\widehat \pi_1 W)^\perp \times \cdots \times (\widehat \pi_m W)^\perp \cap H.\]
Clearly $V$ is  a subgroup of $H$, hence by \eqref{rank},
\begin{equation}\label{tem_r}
\rank V\le \sum_{j=1}^m  p_j^{-1} \rank \pi_j(V).
\end{equation}
Since $\pi_j V\subset (\widehat \pi_j W)^\perp$, by Lemma \ref{rank_dim} we see that
\[ \rank \pi_j V\le \dim \widehat{G}_j- \dim \widehat \pi_j W  =\codim_{\widehat{G}_j} \widehat \pi_j W, \]
and so
$$
\sum_{j=1}^m  p_j^{-1} \rank \pi_j(V)\leq \sum_{j=1}^m   p_j^{-1} \codim_{\widehat{G}_j} \widehat \pi_j(W).
$$
In order to conclude \eqref{eqdim} it therefore suffices to show that 
$$
\rank V\geq \sum_{j=1}^m   \codim_{\widehat{G}_j} \widehat \pi_j(W) -\codim_{H^\perp} W.
$$
To do this we first write
\[ \rank V =\rank U +\rank H -\rank (U +H),\]
where $U=(\widehat \pi_1 W)^\perp \times \cdots \times (\widehat \pi_m W)^\perp.$
Notice that $(U +H) \subset W^\perp$ by Lemma \ref{trivial} and  the assumption that $W\subset H^\perp$. From this we have
\begin{align*}
\rank V  &\ge  \sum_{j=1}^m \rank(\widehat \pi_j W)^\perp +\rank H -\rank W^\perp\\
&= \sum_{j=1}^m \codim_{\widehat{G}_j} \widehat \pi_j W -\codim_{H^\perp} W\\
&\quad  +\dim H^\perp -\dim W +\rank H -\rank W^\perp\\
&= \sum_{j=1}^m \codim_{\widehat{G}_j}\widehat  \pi_j W -\codim_{H^\perp} W,
\end{align*}
as required. Here we also used that $\dim H^\perp -\dim W +\rank H -\rank W^\perp=0$, which follows from Lemma \ref{rank_dim}.

We now establish the converse of the implication \eqref{qual} by very similar reasoning. By Theorem \ref{discrete} it suffices to show that \eqref{rank}, or equivalently
\begin{equation}\label{tired}
\sum_{j=1}^m \frac{ \rank \pi_jV}{p_j'} \le \sum_{j=1}^m   \rank \pi_j V -\rank V,
\end{equation}
holds for all $V\leq H$.
As before, we set 
\[W =(\pi_1 V)^\perp \times \cdots \times (\pi_m V)^\perp \cap H^\perp.\]
Since $W$ is a closed subgroup of $H^\perp$, by Theorem \ref{torus} we have
\begin{equation}\label{tem_d}
\codim_{H^\perp} W\ge \sum_{j=1}^m  \frac1{p_j'}\codim_{\widehat{G}_j} \widehat \pi_jW.
\end{equation}
By Lemma \ref{rank_dim}, we have $$\dim \widehat \pi_j W \le \dim (\pi_j V)^\perp =\rank G_j -\rank \pi_j V,$$ and so
\[\sum_{j=1}^m  \frac1{p_j'}\codim_{\widehat{G}_j} \widehat \pi_jW=\sum_{j=1}^m  \frac{ 1}{p_j'}(\dim \widehat{G}_j-\dim \widehat  \pi_jW)\ge \sum_{j=1}^m  \frac{ \rank \pi_j V}{p_j'}.\]
Moreover, using Lemmas \ref{trivial} and  \ref{rank_dim}, we obtain
\begin{align*}
\codim_{H^\perp} W&= \dim H^\perp -\dim W\\
&\le \dim \big(H^\perp +((\pi_1 V)^\perp \times \cdots \times (\pi_m V)^\perp)\big)-   \dim \big((\pi_1 V)^\perp \times \cdots \times (\pi_m V)^\perp\big) \\
&\le  \dim V^\perp-\sum_{j=1}^m\dim (\pi_j V)^\perp 
=\sum_{j=1}^m \rank \pi_j V -\rank V.
\end{align*} 
The required inequality \eqref{tired} now follows from \eqref{tem_d} and the above two inequalities.

Now we have established the equivalence of the finiteness of $\BL_{\mathbf G}(H,\mathbf p)$ and $\BL_{\widehat{\mathbf G}}(H^\perp,\mathbf p')$, we may appeal to the expressions \eqref{optimal} and \eqref{optimal_t} and complete the proof of Theorem \ref{dualitythm}.
We assume, as we may, that $G_j= \Z^{n_j}\times F_j$ for some finite abelian group $F_j$, so that $\widehat G_j =\T^{n_j}\times \widehat F_j$.   This will suffice as all integral expressions involved in the definitions of $\BL_{\mathbf G}(H,\mathbf p)$ and $\BL_{\widehat{\mathbf G}}(H^\perp,\mathbf p')$ are invariant under taking compositions with the natural group homomorphisms.

It remains to establish \eqref{isome} assuming  that $\BL_{\mathbf G}(H,\mathbf p)$, or equivalently $\BL_{\widehat{\mathbf G}}(H^\perp,\mathbf p')$, is finite. This will be a direct consequence of an abstract form of the Fourier-invariance property \eqref{Finv}, along with the fact that there exist extremisers $\mathbf f=(f_j)$ and $\mathbf g=(g_j)$ for the data $(H,\mathbf p)$ and $(H^\perp,\mathbf p')$, respectively, taking the form  
\begin{equation}\label{form}
 f_j=\chi_{\{0\}}\otimes \chi_{A_j}\quad \text{and}\quad g_j =\chi_{\T^{n_j}}\otimes \chi_{B_j}
 \end{equation}
where $A_j$ and $B_j$ are subgroups of $F_j$ and $\widehat F_j$, respectively. Here we are appealing to both \eqref{optimal} and \eqref{optimal_t}.

Let  $f_j:= |A_j|^{-1/p_j}\chi_{\{0\}}\otimes \chi_{A_j}$ be such an extremiser for the data $(H,\mathbf p)$, that is 
\begin{equation}\label{extremiser0}
\BL_{\mathbf G}(H,\mathbf p)=\int_H f_1\otimes \cdots \otimes f_m(x) d\mu_H(x).
\end{equation}
Next we appeal to the abstract Fourier--invariance property (see for example \cite[Theorem 3.6.3]{DE})
\begin{equation}\label{poisson}
\int_H f_1\otimes \cdots \otimes f_m(x) d\mu_H(x)= \int_{H^\perp}   \widehat f_1\otimes \cdots \otimes  \widehat f_m (\gamma)d \mu_{H^\perp}(\gamma),
\end{equation}
where the Fourier transform $\widehat f_j$ is given by $$\widehat f_j(\gamma_j) =\int_{G_j} f_j(x_j) \overline{\gamma_j(x_j)} d\mu_{G_j}(x_j).$$ Standard reasoning reveals that $\widehat f_j =|A_j|^{1/p_j'}\chi_{\T^{n_j}}\otimes \chi_{A_j^\perp}$ and $\|\widehat f_j\|_{L^{p_j'}(G_j)} 
=1$, and so
\[ \int_{H^\perp}   \widehat f_1\otimes \cdots \otimes  \widehat f_m (\gamma)d\mu_{H^\perp}(\gamma)\le \BL_{\widehat {\mathbf G}}(H^\perp,\mathbf p'). \]
Combining this with \eqref{extremiser0} and \eqref{poisson}, we conclude that $\BL_{\mathbf G}(H,\mathbf p)\le \BL_{\widehat {\mathbf G}}(H^\perp,\mathbf p')$. A very similar argument, involving extremisers for $(H^\perp, \mathbf p')$ of the form $g_j=\chi_{\T^{n_j}}\otimes \chi_{B_j}$, leads to the reverse inequality $\BL_{\widehat {\mathbf G}}(H^\perp,\mathbf p')\le \BL_{\mathbf G}(H,\mathbf p)$. We leave this to the reader.
\end{proof}

\subsubsection*{Remark} As we have seen, if the underlying abelian groups are  finitely-generated, or the duals of such, an extremiser always exists whenever the Brascamp--Lieb constant is finite. In particular,  the Fourier transform maps extremisers of the form  $f_j=\chi_{\Phi_j(\{0\}\times A_j)}$ for $(H,\mathbf p)$ to extremisers of the form $g_j=\chi_{\Psi_j(\T^{n_j} \times B_j)}$ for $(H^\perp, \mathbf p')$; here $\Phi_j:\Z^{n_j}\times F_j\to G_j$ (resp. $\Psi : \T^{n_j}\times \widehat F_j\to \widehat G_j$) is a homomorphism and $A_j$ (resp. $B_j$) is a subgroup of $F_j$ (resp. $\widehat F_j$). Notice that, when $f_j$  is an extremiser for  $(H,\mathbf p)$ with $\|f_j\|_{L^{p_j}(G_j)}$=1, 
\[
\BL_{\mathbf G}(H,\mathbf p)
= \int_{H^\perp} \widehat f_1\otimes \cdots\otimes \widehat f_m d\mu_{H^\perp}\le \BL_{\mathbf {\widehat G}}(H^\perp,\mathbf p'),
\]
by the Fourier-invariance property  \eqref{poisson}, and that equality must hold here
thanks to \eqref{dualityidgen}. Consequently $\widehat f_j$ is an extremiser for $(H^\perp, \mathbf p').$

\section{Proof of Theorem \ref{torus}}\label{pf}
Our proof of Theorem \ref{torus} will follow the induction argument of \cite{BCCT2}. 
Since the dual of a finitely-generated abelian group is isomorphic to the product of a torus and a finite group, Theorem \ref{torus} may be restated as follows.
\begin{theorem}\label{re_torus}
Suppose that $G\cong \T^n\times F$ and $G_j\cong \T^{n_j}\times F_j$ for some finite groups $F_j$, $F$, and integers $n_j, n\ge 0$. Let $\mu_G$ and $\mu_{G_j}$ be the Haar probability measures of $G$ and $G_j$, respectively. Let $\phi_j :G\to G_j$ be a group homomorphism for each $j$. Then  for $1\le p_j\le \infty$ there is a finite constant $C>0$ such that
\begin{equation}\label{BLtorus}
\int_{G}\prod_{j=1}^m f_j\circ \phi_j (x)\, d\mu_G(x) \le C \prod_{j=1}^m \|f_j\|_{L^{p_j} (G_j)}
\end{equation}
if and only if   
\begin{equation}\label{dim'}
\codim_{G} ( W)\ge \sum_{j=1}^m  p_j^{-1} \codim_{G_j} (\phi_j( W))
\end{equation}
for every closed subgroup $W$ of $G$.
In addition, if \eqref{BLtorus} holds with a finite constant, then the smallest constant is given by
\[ \max_{W\subset F} |F|^{-1}|W| \prod_{j=1}^m \Big(|F_j| |\Phi_j^2(0,W)|^{-1}\Big)^{1/p_j},\] 
where the maximum is taken over all subgroups $W$ of $F$. 
Here $\Phi_j=(\Phi_j^1,\Phi_j^2): \T^n\times F \cong G\xrightarrow[]{\phi_j} G_j\cong  \T^{n_j}\times F_j$ coincides with $\phi_j$ up to isomorphism.
\end{theorem}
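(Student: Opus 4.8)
The plan is to adapt the inductive argument of \cite{BCCT2} for the euclidean \emph{localised} Brascamp--Lieb inequality, inducting on the pair $(\dim G,|F|)$ ordered lexicographically, and to establish the finiteness criterion \eqref{dim'} and the sharp constant formula simultaneously. Two reductions are convenient at the outset. Writing $\fm(G)\cong\T^n$ for the maximal torus (identity component) of $G$, one notes that $\codim_G W$ and $\codim_{G_j}\phi_j(W)$ depend only on the identity components $\fm(W)$ and $\fm(\phi_j(W))=\phi_j(\fm(W))$; hence \eqref{dim'}, and the notion of a \emph{critical} subgroup (equality in \eqref{dim'}) that drives the induction, need only be examined on \emph{subtori} of $G$. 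Secondly, testing \eqref{BLtorus} on $\prod_j\chi_{(\phi_j(W))_\varepsilon}$, the indicators of $\varepsilon$-tubes around the subtori $\phi_j(W)$, and letting $\varepsilon\downarrow 0$ -- using $\|\chi_{(\phi_j(W))_\varepsilon}\|_{L^{p_j}(G_j)}^{p_j}\asymp\varepsilon^{\codim_{G_j}\phi_j(W)}$ against the elementary lower bound $\gtrsim\varepsilon^{\codim_G W}$ for the left side of \eqref{BLtorus} -- yields the necessity of \eqref{dim'}. It remains to prove sufficiency together with the constant formula.

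For the lower bound on the constant, test \eqref{BLtorus} on $f_j=\chi_{\T^{n_j}\times\Phi_j^2(0,W)}$ for a subgroup $W\le F$: since $\Phi_j^2$ kills the torus factor of $G$, the condition $\phi_j(x)\in\T^{n_j}\times\Phi_j^2(0,W)$ involves only the finite component of $x$, and a short computation shows the left side of \eqref{BLtorus} is at least $|F|^{-1}|W|$ while $\|f_j\|_{L^{p_j}(G_j)}^{p_j}=|F_j|^{-1}|\Phi_j^2(0,W)|$; taking the supremum over $W$ gives exactly \eqref{optimal_t}. The induction then begins with the base case $\dim G=0$, where $G=F$ and $G_j=F_j$ are finite, \eqref{BLtorus} is automatic, and the sharp constant is the one supplied by Christ's Theorem~\ref{discrete}, \eqref{optimal}, after rescaling the counting measures to probability measures (multiplication of \eqref{optimal} by $|F|^{-1}\prod_j|F_j|^{1/p_j}$, which recovers the claimed formula since here $\Phi_j^2(0,W)=\phi_j(W)$).

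For the inductive step, assume $\dim G=n\ge1$ and that \eqref{dim'} holds. One first removes every index with $p_j=\infty$ using $\|f_j\circ\phi_j\|_\infty\le\|f_j\|_\infty$ pointwise (which affects neither \eqref{dim'} nor the formula), and the necessity argument applied at $W=\fm(G)$ shows that each remaining $\phi_j$ maps $\fm(G)$ onto the maximal torus of $G_j$. Now there are two cases. If $G$ has a critical subtorus $W$ with $0<\dim W<n$, then, exactly as in \cite{BCCT2}, the Brascamp--Lieb form factors through the exact sequence $0\to W\to G\to G/W\to 0$: Weil's integration formula for the probability Haar measures expresses the integral in \eqref{BLtorus} as an iterated integral over $W$ and $G/W$, and the criticality of $W$ is precisely what lets the inner integral be bounded by the constant for $(W,(\phi_j|_W),\mathbf p)$ (with $G_j$ replaced by $\phi_j(W)$) and the outer by that for $(G/W,(\overline{\phi_j}),\mathbf p)$ (with $G_j$ replaced by $G_j/\phi_j(W)$); a short computation using criticality shows both reduced data again satisfy \eqref{dim'}, both have torus factors of dimension $<n$, so the induction hypothesis applies, and multiplying their sharp constants and simplifying -- a combinatorial identity of the same type as appears in \cite{Ch1} -- recovers the formula for $G$. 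If instead $F\neq\{0\}$ and no such $W$ exists, we factor at $\fm(G)$ (which is always critical here, being forced to be so by \eqref{dim'} at $W=\fm(G)$), splitting the problem into the \emph{purely toral} datum on $\fm(G)\cong\T^n$ and the \emph{finite} datum on $G/\fm(G)\cong F$ with maps $\Phi_j^2(0,\cdot)$; these have lexicographic complexity $(n,1)$ and $(0,|F|)$, both below $(n,|F|)$, and the product of their constants -- namely $1$ times the value from Theorem~\ref{discrete} for $F$ -- is exactly $\max_{W\le F}|F|^{-1}|W|\prod_j(|F_j||\Phi_j^2(0,W)|^{-1})^{1/p_j}$.

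The remaining, and hardest, case is that of a \emph{simple} purely toral datum $G=\T^n$ with no critical subtorus of intermediate dimension, where one must show the sharp constant equals $1$; this is the analogue of the simple base case of \cite{BCCT2}, at which the euclidean proof invokes gaussians and Lieb's theorem -- tools with no counterpart here -- so it is the main obstacle. One cannot reduce it to the euclidean case by lifting $\T^n\to\R^n$: the linearised datum can have infinite Brascamp--Lieb constant even when \eqref{dim'} holds on all subtori, its dimension condition failing on \emph{irrational} subspaces, which reflects the fact that integer dilations of $\T$ are measure preserving and so absorb anisotropy that the euclidean inequality cannot. Instead one argues directly: constant functions are extremisers, giving $\ge1$, and for the bound $\le1$ one runs a secondary induction in which the absence of a critical intermediate subtorus forces the datum, after a suitable choice of circle direction inside $\T^n$, to split into lower-dimensional pieces controlled by H\"older's inequality and by Young's convolution inequality on the compact group $\T$ -- both of which hold with constant $1$ on probability spaces. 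Carrying the sharp constant, rather than mere finiteness as in \cite{BCCT2}, through this last reduction is the delicate point; once it is in hand, unwinding the outer induction -- every datum resolves into simple purely toral pieces, each contributing $1$, together with a single finite residue governed by Theorem~\ref{discrete} -- yields \eqref{optimal_t} and completes the proof.
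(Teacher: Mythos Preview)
Your overall architecture matches the paper's: necessity via tube testing, reduction of the disconnected case by peeling off the finite factor and invoking Theorem~\ref{discrete}, and treatment of the critical-subtorus case by the quotient-integral factorisation of \cite{BCCT2}. The genuine gap is in the simple purely toral case, which you rightly flag as the crux but then leave essentially unproved.

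Your description of this step also misreads \cite{BCCT2}. The elementary argument there does \emph{not} invoke gaussians or Lieb's theorem in the simple case; it uses multilinear interpolation over the closed convex polytope $\mathcal{C}\subset[0,1]^m$ of exponent-reciprocals $(t_j)$ satisfying the codimension constraints \eqref{dim'}. This is precisely what the paper does here: when $(\T^n,(\phi_j),(1/p_j))$ is simple, the point $(1/p_j)$ lies in the interior of $\mathcal{C}$, and at every \emph{extreme} point of $\mathcal{C}$ either some $t_j=0$ (so one inducts on $m$), or $m=1$, or some previously non-critical subtorus becomes critical at the new exponents (so the factorisation step applies). In each boundary case the constant is $1$, and interpolation then gives constant $1$ at $(1/p_j)$. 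The argument carries the sharp constant, not merely finiteness, and needs nothing beyond what is already in \cite{BCCT2}; your concern that ``carrying the sharp constant\ldots is the delicate point'' is therefore misplaced.

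Your proposed alternative---``a suitable choice of circle direction'' splitting the datum into pieces controlled by H\"older and by Young on $\T$---is too vague to assess, and there is no evident mechanism by which a general simple toral Brascamp--Lieb datum decomposes this way; Young's datum is one particular simple example, not a building block for all of them. A secondary point: in your critical-subtorus step you defer the check that the sharp constants multiply correctly to ``a combinatorial identity of the same type as in \cite{Ch1}''. The paper sidesteps this entirely by first establishing the connected case with constant $1$ throughout, so that the only nontrivial constant enters exactly once, from the finite quotient; your organisation would force you to track constants through every inductive split.
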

We note that since Theorem \ref{re_torus} is intended to be applied to the dual of a finitely-generated group, its choice of Haar measure normalisation is consistent with the conventions discussed in the introduction.
\begin{proof}
By the algebraic invariance of the statement of Theorem \ref{re_torus} it suffices to assume that $G=\T^n\times F$ and $G_j= \T^{n_j}\times F_j$ for each $j$.

We first prove the necessity of \eqref{dim'} using  a Knapp type example.
Let $W$ be a nontrivial subgroup of $G$, $0<r\ll 1$ and let $f_j$ be the characteristic function of $\{x_j\in G_j : \dist(x_j, \phi_j(w))\le r\,\,\text{for some } w\in W\}$. Then
\[\|f_j\|_{L^{p_j}(G_j)}^{p_j} \lesssim  r^{\dim G_j -\dim \overline{\phi_j(W)}}=r^{\codim_{G_j} (\phi_j(\overline W))}.\]
Since $\prod_{j=1}^m f_j\circ \phi_j (x) =1$ whenever $\dist(x,w)\le cr$, for some $w\in \overline W$ and constant $c>0$, 
\[\int_{G}\prod_{j=1}^m f_j\circ \phi_j (x) d\mu_G(x)\gtrsim r^{\dim G-\dim \overline W}. \]
Inequality \eqref{dim'} follows on letting $r\to0$.\\

We now turn to the sufficiency of \eqref{dim'}. Without loss of generality we may assume that $1\le p_j<\infty$ for all $j=1,\hdots,m$, since the terms $j$ with $p_j=\infty$ have no effect on the estimate \eqref{BLtorus}.  Observe first that, by \eqref{dim'} applied with $W=G$,  we have $\dim \phi_j (G)=\dim G_j$ for all $j$.\\

We first deal with the case that $G$ and the $G_j$ are  connected, that is $G=\T^n$ and $G_j=\T^{n_j}$, and argue by induction on $n$.\\

If $n=1$ then clearly $\dim G_j\in\{0,1\}$ for each $j$. Neglecting, as we may, those factors for which $\dim G_j=0$, and noting that $\|f_j\circ \phi_j\|_{L^p(G)}=\|f_j \|_{L^p(G_j)}$ for all $p$ otherwise, the estimate \eqref{BLtorus} (with constant $C=1$) follows quickly from \eqref{dim'} by H\"older's inequality.

We now fix $n>1$ and  assume that the estimate \eqref{BLtorus} holds with $C=1$ under the assumption \eqref{dim'}, whenever $\dim G<n$.  We first deal with the case that  there is a non-trivial connected closed subgroup $W\subsetneq G$ such that $\codim_G(W)=\sum_{j=1}^{m} p_j^{-1}\codim_{G_j }(\phi_j(W))$. We call such $W$ a critical subgroup of $G$. Since $W$ is closed and connected, the quotient group $G/W$ is also equivalent to a torus. Clearly $\dim W$ and $\dim G/W$ are both less than $n$.  Denoting by  $\mu_W$ and $\mu_{G/W}$ the associated Haar probability measures, and applying the quotient integral formula, we have
\[ \int_G \prod_{j=1}^m f_j\circ \phi_j d\mu_G
= \int_{G/W} \Big(\int_W \prod_{j=1}^m f_j\circ \phi_j (x+y) d\mu_W(y)\Big) d\mu_{G/W}(\xi).\]
For each $\xi\in G/W$ we fix a representative $x_\xi \in G$ of $\xi$ and  define $f_{j,\xi}(y_j) =f_j(y_j +\phi_j(x_\xi))$ for  $y_j\in \phi_j(W)$. Then if $y\in W$
\[ f_j\circ \phi_j(x_\xi +y) = f_j(\phi_j(y) +\phi_j(x_\xi)) =f_{j,\xi}\circ\phi_j|_W(y),\]
where $\phi_j|_W$ is the restriction of $\phi_j$ to $W$.
Note that $\phi_j|_W$ is a homomorphism of $W$ onto $U_j:= \phi_j(W)$. Further, for any closed subgroup of $W$ the inequality \eqref{dim'} holds with respect to $\phi_j|_W$, since $W$ is critical. Thus the induction assumption yields
\begin{equation}\label{tem_to}
\begin{aligned}
\int_G \prod_{j=1}^m f_j\circ \phi_j \,d\mu_G
&\le \int_{G/W} \prod_{j=1}^m \|f_{j,\xi}\|_{L^{p_j}(U_j)} \,d\mu_{G/W}(\xi)\\
&= \int_{G/W} \prod_{j=1}^m \Big( \int_{U_j} f_{j} (y_j+\phi_j(x_\xi) )^{p_j} d\mu_{U_j}(y_j)\Big)^{1/p_j} \,d\mu_{G/W}(\xi).
\end{aligned}\end{equation}
Here $\mu_{U_j}$ is the Haar probability measure on $U_j$.
We now define a group homomorphism $\varphi_j :G/W\to G_j/U_j$ by $\varphi_j(\xi) =\phi_j(x_\xi)+U_j$, $\xi \in G/W,$  and set $F_j(\xi_j) =\| f_j(\cdot \,+x_j)\|_{L^{p_j}(U_j)}$ for any $\xi_j\in G_j/U_j$  containing $x_j\in G_j$. Then the right hand-side of \eqref{tem_to} may be written as
\[ \int_{G/W} \prod_{j=1}^mF_j\circ\varphi_j (\xi)  d\mu_{G/W}(\xi).\]
We claim that
\begin{equation}\label{quo}
\codim_{G/W} V\ge\sum_{j=1}^m p_j^{-1} \codim_{G_j/U_j} \varphi_j(V)
 \end{equation}
holds for any closed subgroup $V$ of $G/W$.
Accepting this momentarily, since $\dim G/W <\dim G$, and applying the induction assumption again, we have
\begin{align*}
\int_{G/W} \prod_{j=1}^mF_j\circ\varphi_j (\xi)  d\mu_{G/W}(\xi) &\le \prod_{j=1}^m \Big(\int_{G_j/U_j} \Big( \int_{U_j} f_j(x_j+y_j)^{p_j} d\mu_{U_j}(y_j) \Big) d\mu_{G_j/U_j}(\xi_j)\Big)^{1/p_j}\\
&=\prod_{j=1}^m \| f_j\|_{L^{p_j}(G_j)}.
\end{align*}
Here the last equality follows from the quotient integral formula. Combining this  with \eqref{tem_to}, we obtain the desired estimate \eqref{BLtorus}.

It remains to prove \eqref{quo}. For a closed subgroup $V$ of $G/W$ we write $\mathfrak V =\mathfrak q^{-1}(V)$, where $\mathfrak q: G\to G/W$ is the quotient map. Since $V=\mathfrak V/W$ and $\varphi_j(V)=\phi_j(\mathfrak V)/U_j$, 
\[ \codim_{G/W} V= \codim _G \mathfrak V,\]
by the quotient manifold theorem.
Hence \eqref{dim'} implies that
\begin{align*}
\codim_{G/W} V
& \ge \sum_{j=1}^m p_j^{-1} (\codim_{G_j} \phi_j(\mathfrak V))\\
&= \sum_{j=1}^m p_j^{-1} \Big(\dim{G_j} -  \dim \varphi_j( V) -\dim U_j\Big)\\
&=\sum_{j=1}^m p_j^{-1} \codim_{G_j/U_j} \varphi_j( V) ,
 \end{align*}
by a further application of the quotient manifold theorem.

We next consider the case where each connected closed proper subgroup $W$ of $G$ gives rise to strict inequality in \eqref{dim'}.  In this case we define a closed convex set $\mathcal C$ by
\begin{align*} 
\mathcal C=\Big\{ t \in [0,1]^m : \codim_G &\,W\ge \sum_{j=1}^m t_j\, \codim_{G_j}\phi_j(W),\\ &\,\text{for all  connected closed subgroups } W\Big\}.
\end{align*}
By multilinear interpolation it is enough to show that for any extreme point  $t\in \mathcal C$,
\begin{equation}\label{tBL}
 \int_{G} \prod_{j=1}^m f_j\circ \phi_j\, d\mu_G \le \prod_{j=1}^m \|f_j\|_{L^{1/t_j}(G_j)}.
 \end{equation}
If $t$ is such a point then it satisfies at least one of the following: (i) there is a  proper connected critical subgroup $W$ of $G$ with respect to $t$; (ii) at least one of the $t_j$ is equal to $0$; 
(iii) $m=1$. From the above argument we see that \eqref{tBL} holds for the first case. In the third case we have $\|f_1\circ \phi_1\|_{L^1(G)}=\|f_1\|_{L^1(G_1)}$,
and so \eqref{tBL} holds.   In the second case we simply appeal to induction  on the number of components $m$. Hence \eqref{tBL} holds for all extreme points $t$ of $\mathcal C.$ \\

We now deal with the case where $G$ is not connected. In what follows we write $(y,z)\in \T^n\times F$,  $(y_j,z_j)\in \T^{n_j}\times F_j$ and $\phi_j(y,z)=(\phi_j^1(y,z),\phi_j^2(y,z))\in \T^{n_j}\times F_j$ for all $1\le j\le m.$  Since each $\phi_j^2$ is continuous it depends only on the variable $z$, and so for convenience we redefine it as a function on $F$.  For a nonnegative measurable function $f_j$ on $\T^{n_j}\times F_j$ and $z\in F$ we let $g_{j,z}(y_j) =f_j(y_j+\phi_j^1(0,z),\phi_j^2(z))$ for each $y_j\in \T^{n_j}$. Since
\[ f_j(\phi_j(y,z)) =f_j(\phi_j^1(y,0)+\phi_j^1(0,z),\phi_j^2(z))=g_{j,z}(\phi_j^1(y,0)),\]
we have
\[ \int_{G}\prod_{j=1}^m f_j\circ \phi_j (x) d\mu_G(x) =  \frac1{|F|}\sum_{z\in F} \int_{\T^n} \prod_{j=1}^m g_{j,z}(\phi_j^1(y,0)) dy. \]
Here $dy$ is the usual Lebesgue measure on $\T^n$.
The function $\psi_j(\cdot):=\phi_j^1(\cdot, 0)$ is a homomorphism of $\T^n$ into $\T^{n_j}$, and $\dim \phi_j(W\times \{0\}) =\dim \psi_j (W)$ for any closed subgroup $W$ of $\T^n$, and hence \eqref{dim'} is satisfied with respect to the data $(\T^n, \T^{n_j},\psi_j)$. Applying the case we have already established (for connected $G$), we have that
\begin{align*}
\frac1{|F|} \sum_{z\in F}\int_{\T^n} \prod_{j=1}^m g_{j,z}(\psi_j(y)) dy
&\le \frac1{|F|}\sum_{z\in F} \prod_{j=1}^m\Big( \int_{\T^{n_j}} |f_j(y_j+\phi^1_j(0,z),\phi^2_j(z))|^{p_j}  dy_j\Big)^{1/p_j}\\
&=\frac1{|F|}\sum_{z\in F} \prod_{j=1}^m\Big( \int_{\T^{n_j}} |f_j(y_j,\phi^2_j(z))|^{p_j}  dy_j\Big)^{1/p_j}.
\end{align*}

Note that $\phi_j^2 : F\to F_j$ is a  homomorphism of finite groups, and the induced measure on $F_j$ from $\mu_{G_j}$ is normalised counting measure for each $j$. Thus, applying Theorem  \ref{discrete} we have
\begin{align*}
\frac1{|F|}\sum_{z\in F} \prod_{j=1}^m&\Big( \int_{\T^{n_j}} |f_j(y_j,\phi^2_j(z))|^{p_j}  dy_j\Big)^{1/p_j}\\
&\le |F|^{-1} \BL_{\mathbf G}( H, \mathbf p) \prod_{j=1}^m \Big( \sum_{z_j\in F_j} \int_{\T^{n_j}} |f_j(y_j,z_j)|^{p_j}  dy_j\Big)^{1/p_j}\\
&=|F|^{-1} \BL_{\mathbf G}( H, \mathbf p) \Big(\prod_{j=1}^m|F_j|^{1/p_j}\Big) \prod_{j=1}^m\|f_j\|_{L^{p_j}(G_j)},
\end{align*}
where $H=\big\{(\phi_1^2(z),\cdots, \phi_m^2(z)) :z\in F\big\}$ is a subgroup of $ F_1\times \cdots\times F_m$. Combining all of the above estimates, we obtain
\[
\int_{G}\prod_{j=1}^m f_j\circ \phi_j (x) d\mu_G(x)\le \mathscr C(\phi, \mathbf p)\prod_{j=1}^m \|f_j \|_{L^{p_j}(G_j)}
\]
 with the constant $\mathscr C(\phi, \mathbf p)$ given by
\[ \mathscr C(\phi, \mathbf p) = \max\Big\{ |F|^{-1} |W| \prod_{j=1}^m \Big(|F_j|^{-1}|\phi^2_j(W)|\Big)^{-1/p_j} : W \text{ is a subgroup of }F\Big\}.\]

Now it remains to show that $\mathscr C(\phi, \mathbf p)$ is the smallest constant for \eqref{BLtorus} if \eqref{BLtorus} holds with a finite constant. By Theorem \ref{discrete}  there are subgroups $\Gamma_j$ of $F_j$, $1\le j\le m$, such that $\mathbf h=(h_j)$ with $h_j^{p_j}=|F_j||\Gamma_j|^{-1}\chi_{\Gamma_j}$ is an extremiser of the Brascamp--Lieb inequality associated with $(H,\mathbf p)$, that is to say
\[\sum_{z\in F} \Big(\prod_{j=1}^m h_j\circ \phi^2_j(z)\Big) =\BL_{\mathbf G}(H,\mathbf p)\prod_{j=1}^m \Big(\sum_{z_j\in F_j}h_j(z_j)^{p_j}\Big)^{1/p_j}.
\]
Let us set $f_j:= \chi_{\T^{n_j}}\otimes h_j$ on $\T^{n_j}\times F_j$. Then $\|f_j\|_{L^{p_j}(G_j)}=1$  and
\begin{align*}
\int_{G}\prod_{j=1}^m f_j\circ \phi_j (x) d\mu_G(x)
&= |F|^{-1}\sum_{z\in F} \Big(\prod_{j=1}^m h_j\circ \phi_j^2 (z)\Big)\\
&=|F|^{-1}\BL_{\mathbf G}(H,\mathbf p)\prod_{j=1}^m  |F_j|^{1/p_j}\\
&=\mathscr C(\phi, \mathbf p) \|f_j\|_{L^{p_i}(G_j)}.
\end{align*}
Hence $\mathscr C(\phi, \mathbf p)$ is the smallest constant for \eqref{BLtorus}.
\end{proof}

\subsubsection*{Remark} By the lifting lemma it is possible to find a sufficient condition for the finiteness of the Brascamp--Lieb constant appearing in the statement of Theorem \ref{re_torus} from that for the so-called \textit{localised Brascamp--Lieb constant} in \cite[Theorem 2.2]{BCCT2}, avoiding the above argument. Indeed, when $G=\T^n$, $G_j=\T^{n_j}$ and $\phi_j$ is surjective, there exists a linear transformation $\psi_j:\mathbb R^n\to \mathbb R^{n_j}$ such that $q_j\circ \psi_j=\phi_j\circ q$, where $q:\mathbb R^n\to T^n$ and $q_j:\mathbb R^{n_j}\to \T^{n_j}$ are the quotient maps. So, for any non-negative measurable function $f_j$  on $\T^{n_j}$,  the inequality \eqref{BLtorus} coincides with
\begin{equation*}
\int_{[-\frac12,\frac12)^n}\prod_{j=1}^m g_j\circ \psi_j (x) \,dx \le C \prod_{j=1}^m  k_j^{-1/p_j}\|g_j\|_{L^{p_j}(\mathbb R^{n_j})}.
\end{equation*}
Here $g_j =\widetilde f_j|_{B_j}$ where $\widetilde f_j$  is the periodic extension of $f_j$ and $B_j= \psi_j([-\frac12,\frac12)^n)$. Since  $B_j$ is a $k_j$-fold covering of $\T^{n_j}$ for some $k_j\ge1$,  we have that $\|g_j\|^{p_j}_{L^{p_j}(\R^{n_j})} = k_j \|f_j\|^{p_j}_{L^{p_j}(\T^{n_j})}$.  Hence, by Theorem 2.2 of  \cite{BCCT2},  \eqref{BLtorus} holds with finite  $C>0$ if  \eqref{dim'} holds for any subgroup of the form $W=q(V)\subset \T^n$ for some subspace $V$ of $\mathbb R^n.$
However, this reduction to the localised Brascamp--Lieb inequality does not appear to easily provide information on the precise Brascamp--Lieb constant, as is required by Theorem \ref{re_torus}.

\section{Further lines of enquiry and remarks}\label{further}
\subsection{Finiteness of the Brascamp--Lieb constant on general LCA groups}
As mentioned in the introduction -- see \eqref{conj} and \eqref{dualityidgen} -- it seems natural to expect that Theorems \ref{basicthm} and \ref{dualitythm} have a common generalisation to the setting of arbitrary LCA groups.  Given our approaches to Theorems \ref{basicthm} and \ref{dualitythm}, the natural first step would be to establish a finiteness criterion for the Brascamp--Lieb constant at this level of generality.

\subsection{The Brascamp--Lieb inequality on noncommutative groups}\label{finite}
While the definition of the Brascamp--Lieb constant $\BL_{\mathbf G} (H,\mathbf p)$ given in \eqref{BLabstract} readily generalises to general (unimodular\footnote{We include this hypothesis merely for convenience, allowing us to avoid making choices of Haar measures.}) locally compact groups, the complexities in defining the Fourier transform in this setting make the prospect of establishing an associated  Fourier duality principle of the form \eqref{conj} (for example) appear rather unclear. However, analogues of Theorems \ref{discrete} and \ref{torus} remain quite plausible. We illustrate this with the following theorem in the case of general \emph{finite} groups.
\begin{proposition}\label{optimal_f} Let $G_j$ be a finite group, $j=1,\cdots,m,$ and  $H$ be a subgroup of $G_1\times  \cdots\times G_m$. Assume that $G_j$ and $H$ are equipped with the counting measures $\mu_{G_j}$ and $\mu_H$, respectively.  For $\mathbf p=(p_j)\in [1,\infty]^m$, we have
\begin{equation}\label{cons_f}
\BL_{\mathbf G} (H,\mathbf p) 
=\max_{V_j\subset G_j} \Big|\int_H (a_1\chi_{V_1})\otimes \cdots\otimes (a_m\chi_{V_m}) d\mu_H \Big|,
\end{equation}
where the maximum is taken over all subgroups $V_j$ of $G_j$ and $a_j=|V_j|^{-1/p_j}$, $j=1,\cdots,m$.
\end{proposition}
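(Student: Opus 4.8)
The plan is to adapt the proof of Theorem~\ref{discrete} to the finite --- but now possibly non-commutative --- setting, the point being that all of its structural ingredients are purely group-theoretic. Write $\Lambda_H(\mathbf f):=\int_H f_1\otimes\cdots\otimes f_m\,d\mu_H=\sum_{(x_1,\dots,x_m)\in H}\prod_{j=1}^m f_j(x_j)$. Since each $L^{p_j}(G_j)$ is finite-dimensional, the polynomial $\Lambda_H$ attains its supremum over the compact set $\prod_j\{\|f_j\|_{p_j}\le1\}$; hence $\BL_{\mathbf G}(H,\mathbf p)<\infty$ and is realised by an extremiser $\mathbf f$, which --- replacing $f_j$ by $|f_j|$, and deleting the values of $f_j$ outside the subgroup $\pi_j(H)\le G_j$ --- may be taken with $f_j\ge0$ and $\supp f_j\subset\pi_j(H)$. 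Replacing $G_j$ by $\pi_j(H)$ alters neither side of \eqref{cons_f}: for the right-hand side because $|H\cap\prod_j V_j|=|H\cap\prod_j(V_j\cap\pi_j(H))|$ while $|V_j\cap\pi_j(H)|\le|V_j|$; so we may assume each $\pi_j$ surjective. Testing \eqref{BLabstract} with $a_j\chi_{V_j}$, which has unit $L^{p_j}$ norm, gives the inequality ``$\ge$'' in \eqref{cons_f}; and the right-hand side of \eqref{cons_f} equals
\[
\max_{K\le H}\ |K|\prod_{j=1}^m|\pi_j(K)|^{-1/p_j},
\]
since for a subgroup $K\le H$ one may take $V_j=\pi_j(K)$, whereas for given $V_j$ the subgroup $K:=H\cap\prod_j V_j$ satisfies $|\pi_j(K)|\le|V_j|$.

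For the reverse inequality in \eqref{cons_f} I would argue by induction, primarily on the number of factors $m$ and secondarily on $|H|$; the cases $|H|=1$ and $m=1$ are immediate. Fix an extremiser $\mathbf f$ as above. If $p_j=\infty$ for some $j$ we may take $f_j=\chi_{\pi_j(H)}$ and delete the $j$-th factor (projecting $H$ onto the remaining coordinates and tracking the resulting constant multiplicity on both sides of \eqref{cons_f}), reducing $m$; so assume $p_j<\infty$ for all $j$. Put $S:=\{h\in H:\prod_j f_j(\pi_j(h))\neq0\}$. If $\langle S\rangle$ is a proper subgroup $H'\lneq H$ --- which happens, for instance, whenever $\langle\supp f_j\rangle\neq\pi_j(H)$ for some $j$ --- then the terms of $\Lambda_H(\mathbf f)$ indexed by $H\setminus H'$ vanish, so $\Lambda_H(\mathbf f)=\Lambda_{H'}(\mathbf f)$, while each $f_j$ restricted to $\pi_j(H')$ still has $L^{p_j}$-norm $\le1$. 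Thus $\BL_{\mathbf G}(H,\mathbf p)$ is bounded by the Brascamp--Lieb constant of the datum $(H',\mathbf p)$ over the groups $\pi_j(H')$, which by the inductive hypothesis and the reformulation above equals $\max_{K\le H'}|K|\prod_j|\pi_j(K)|^{-1/p_j}\le\max_{K\le H}|K|\prod_j|\pi_j(K)|^{-1/p_j}$, the required bound.

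There remains the case in which $\langle S\rangle=H$ for \emph{every} extremiser, and this is where I expect the real difficulty to lie --- it is exactly the step carried out in the abelian setting by \cite{CDKSY,Ch1}. Here one must show that $\BL_{\mathbf G}(H,\mathbf p)$ is attained at a uniform density on some subgroup $K\le H$, so that the extremiser may be chosen of the stated form $a_j\chi_{V_j}$. Assuming (the endpoint cases $p_j\in\{1,\infty\}$ needing minor, elementary, separate treatment, because of the coset-versus-subgroup phenomenon already visible in the footnote to Theorem~\ref{basicthm}) that $1<p_j<\infty$ for all $j$, the Lagrange conditions at $\mathbf f$ read $\sum_{h:\pi_j(h)=x}\prod_{k\neq j}f_k(\pi_k(h))=\Lambda\,f_j(x)^{p_j-1}$ with $\Lambda=\BL_{\mathbf G}(H,\mathbf p)$; hence $\mu(h):=\Lambda^{-1}\prod_j f_j(\pi_j(h))$ is a probability measure on $H$ with $j$-th marginal $f_j^{p_j}$, and, taking logarithms in $\prod_j f_j(\pi_j(h))=\Lambda\,\mu(h)$ and integrating against $\mu$, $\log\BL_{\mathbf G}(H,\mathbf p)=\mathrm{Ent}(\mu)-\sum_j p_j^{-1}\mathrm{Ent}((\pi_j)_*\mu)$; equivalently $\log\BL_{\mathbf G}(H,\mathbf p)=\sup_\nu\bigl[\mathrm{Ent}(\nu)-\sum_j p_j^{-1}\mathrm{Ent}((\pi_j)_*\nu)\bigr]$ over probability measures $\nu$ on $H$. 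Two natural routes to locating this supremum at a subgroup: (i) a uniqueness-type argument forcing the extremal $\mu$ to agree with all of its translates $\tau_{h_0}\mu$, $h_0\in H$, hence to be uniform on $H$; or (ii) passing to tensor powers, using the multiplicativity $\BL_{\mathbf G^{N}}(H^{N},\mathbf p)=\BL_{\mathbf G}(H,\mathbf p)^{N}$ together with a method-of-types count, and then a Shearer-type submodularity argument with its equality case. In any event, the only properties of the $G_j$ used throughout, beyond their finiteness, are that the push-forward of normalised counting measure under a surjective homomorphism is normalised counting measure on the image and that the fibres of such a homomorphism have equal cardinality; neither involves commutativity, which is precisely why the argument transfers from Theorem~\ref{discrete}. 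Finally, \eqref{cons_f} in the form with subgroups $V_j$ of $G_j$ follows from the reformulation recorded in the first paragraph.
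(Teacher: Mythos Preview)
Your argument is incomplete at exactly the point you flag: the case in which every extremiser has support generating all of $H$. Neither of your two sketched routes is carried through. Route~(i) needs a \emph{uniqueness} statement for the extremal $\mu$ in order to conclude $\mu=\tau_{h_0}\mu$, but the entropic functional $\nu\mapsto\mathrm{Ent}(\nu)-\sum_j p_j^{-1}\mathrm{Ent}((\pi_j)_*\nu)$ is a difference of concave functions and so is neither concave nor convex in general; translates of an extremiser are again extremisers, but nothing you have written prevents there being a continuum of distinct extremisers, and averaging over translates does not obviously preserve extremality. Route~(ii) is only gestured at. So as it stands the proposal does not establish the reverse inequality.

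The paper's proof avoids this difficulty entirely by a different and much shorter device. It proves an abstract form of Ball's inequality,
\[
\BL(\varphi,\mathbf s;\mathbf f)\,\BL(\varphi,\mathbf s;\mathbf g)\le \BL(\varphi,\mathbf s)\,\BL(\varphi,\mathbf s;\mathbf f*\mathbf g),
\]
valid on arbitrary finite groups (the proof is a two-line change of variables using only that the Haar measure is translation-invariant and that the $\varphi_j$ are homomorphisms). This shows that convolving an extremiser with itself produces another extremiser. One then proves the elementary lemma that for any nonnegative $L^1$-normalised $f$ on a finite group $G$ there is an integer $k_\circ$ and a subgroup $\Gamma\le G$ with $f^{(\ell k_\circ)}\to|\Gamma|^{-1}\chi_\Gamma$ as $\ell\to\infty$ (a consequence of the ergodic theorem for random walks on finite groups). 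Applying this to each $f_j$ in an extremiser, with a common multiple of the $k_\circ$'s, and passing to the limit, one lands on an extremiser of the form $(|\Gamma_j|^{-1}\chi_{\Gamma_j})_j$. No induction, no splitting into cases, and no entropy or Lagrange-multiplier analysis is needed; the argument is uniform over all finite groups, abelian or not.
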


\subsubsection*{Remark} 
The Brascamp--Lieb constant \eqref{cons_f} is given by
\begin{equation*}\label{pro_fin}
\begin{aligned}
\BL_{\mathbf G} (H,\mathbf p) 
&= \max\Big\{\Big|\bigcap_{j=1}^m \pi_j^{-1}(V_j)\Big| \prod_{j=1}^m (|V_j|)^{-1/p_j} : V_j \text{ is a subgroup of }G_j \Big\}\\
&= \max\Big\{  |V| \prod_{j=1}^m (|\pi_j(V)|)^{-1/p_j} : V \text{ is a subgroup of }G\Big\}.
\end{aligned}
\end{equation*}

Before proving Proposition \ref{optimal_f},  we introduce some convenient notation. 
For each $1\le j\le m$, let $G$ and $G_j$ be finite  groups,  
$\varphi_j:G\rightarrow G_j$ be homomorphisms, and
\[
\BL(\mathbf{\varphi},\mathbf{s};\mathbf{f})=\frac{\int_G\prod_{j=1}^m f_j(\varphi_j(x))^{s_j}d\mu(x)}{\prod_{j=1}^m(\int_{G_j}f_j(x_j) d\mu_j(x_j))^{s_j}}.
\]
Here $\mathbf{\varphi}=(\varphi_j)$, $\mathbf{s}=(s_j)\subset[0,1]^m$, $\mathbf{f}=(f_j)$, and all integrals are with respect to the counting measures $\mu$ and $\mu_j$ on $G$ and $G_j$, respectively. The mapping $\BL(\mathbf{\varphi},\mathbf{s};\cdot)$ is often referred to as the \textit{Brascamp--Lieb functional}, and
\begin{equation}\label{lieb}
\BL(\varphi,\mathbf{s}):=\sup_{\mathbf{f}\neq 0}\BL(\varphi,\mathbf{s};\mathbf{f})\end{equation} 
the \emph{Brascamp--Lieb constant} for the data $(\varphi,\mathbf{s})$. 
Proposition \ref{optimal_f} may now be restated as follows.
\begin{proposition}\label{thm_christ} Let $G$ and $G_j$ be finite  groups. Then
\begin{equation*}\label{lieb}
\BL(\varphi,\mathbf{s})
=\max_{\mathbf f} \BL(\varphi,\mathbf{s};\mathbf{f}), 
\end{equation*} 
where the maximum is taken over $f_j =a_j \chi_{\Gamma_j}$,  where  $\Gamma_j $  is a subgroup of $G_j$, and $a_j>0$.
\end{proposition}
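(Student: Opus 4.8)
The plan is to reduce Proposition \ref{thm_christ} to the finite-group case of Christ's theorem (Theorem \ref{discrete}) together with an optimisation argument over exponents. First I would recall that, via the correspondence $p_j = 1/s_j$ and $f_j \mapsto f_j^{p_j}$, the Brascamp--Lieb functional $\BL(\varphi,\mathbf{s};\mathbf{f})$ is exactly the ratio whose supremum defines $\BL_{\mathbf{G}}(H,\mathbf{p})$ when $H = \{x \in G_1\times\cdots\times G_m : \text{there exists } g\in G \text{ with } \varphi_j(g) = x_j\}$ — or, more directly, when $G$ is viewed as mapping into $\prod G_j$ via $x \mapsto (\varphi_1(x),\dots,\varphi_m(x))$ with image $H$. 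So it suffices to show the supremum is attained by tuples of (normalised) subgroup indicator functions $f_j = a_j\chi_{\Gamma_j}$, $\Gamma_j \le G_j$, $a_j = |\Gamma_j|^{-1/p_j}$.

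The key steps, in order, would be: (1) By rescaling each $f_j$ we may assume $\|f_j\|_{L^{p_j}} = 1$; since all groups are finite the supremum in \eqref{lieb} is over a compact set of such normalised tuples (or, if some $p_j = \infty$, a routine truncation/limiting argument removes those factors as in the proof of Theorem \ref{re_torus}), so a maximiser $\mathbf{f}$ exists. (2) Apply Theorem \ref{discrete}: since $\BL(\varphi,\mathbf{s}) = \BL_{\mathbf{G}}(H,\mathbf{p})$ is finite (everything is finite), formula \eqref{optimal} gives $\BL_{\mathbf{G}}(H,\mathbf{p}) = \max_{V \le H} |V|\prod_j |\pi_j(V)|^{-1/p_j}$, the maximum running over subgroups $V$ of $H$ — but here \emph{every} subgroup of $H$ is finite, so this is literally a maximum over all subgroups. (3) Given the optimal $V \le H$, set $\Gamma_j := \pi_j(V) \le G_j$ and $f_j := |\Gamma_j|^{-1/p_j}\chi_{\Gamma_j}$; then check directly that $\int_H f_1\otimes\cdots\otimes f_m = |H \cap (\Gamma_1\times\cdots\times\Gamma_m)| \prod_j |\Gamma_j|^{-1/p_j} \ge |V|\prod_j|\pi_j(V)|^{-1/p_j} = \BL_{\mathbf{G}}(H,\mathbf{p})$, using $V \subseteq H\cap\prod_j\Gamma_j$. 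The reverse inequality $\int_H f_1\otimes\cdots\otimes f_m \le \BL_{\mathbf{G}}(H,\mathbf{p})\prod_j\|f_j\|_{p_j} = \BL_{\mathbf{G}}(H,\mathbf{p})$ is just the Brascamp--Lieb inequality itself, so equality holds and the subgroup tuple $(f_j)$ is a maximiser.

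Translating back through $s_j = 1/p_j$, this produces exactly the claimed extremisers $f_j = a_j\chi_{\Gamma_j}$ with $a_j = |\Gamma_j|^{-s_j} > 0$, proving Proposition \ref{thm_christ} and hence Proposition \ref{optimal_f} and the two displayed formulae in the Remark. (The second displayed formula there, $\max_V |V|\prod_j |\pi_j(V)|^{-1/p_j}$, is immediate from \eqref{optimal}; the first follows on noting $\bigcap_j \pi_j^{-1}(V_j) = H \cap (V_1\times\cdots\times V_m)$ and that the maximum over subgroups $V_j \le G_j$ of $|H\cap\prod_j V_j|\prod_j|V_j|^{-1/p_j}$ agrees with the maximum over subgroups $V \le H$ after the substitution $V_j = \pi_j(V)$ in one direction and $V = H\cap\prod_j V_j$ in the other.)

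The main obstacle — though a mild one — is step (2): one must be slightly careful that the finite-group statement of Theorem \ref{discrete} applies verbatim. Since $G_j \cong \Z^{0}\times F_j$ with $F_j$ finite, this is the purely torsion case of that theorem, for which the rank conditions \eqref{rank} are automatically satisfied (all ranks are zero) so $\BL_{\mathbf{G}}(H,\mathbf{p})$ is always finite, and \eqref{optimal} holds unconditionally. A secondary point worth spelling out is the handling of $p_j = \infty$: here $\|f_j\|_\infty$ should be read as $\sup_{G_j} f_j$ (cf. the footnote to Theorem \ref{basicthm}), the factor $|\Gamma_j|^{-1/p_j}$ is interpreted as $1$, and such $j$ contribute trivially, exactly as in the reduction at the start of the sufficiency argument in the proof of Theorem \ref{re_torus}; so no genuine difficulty arises there either.
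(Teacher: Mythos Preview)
Your argument has a genuine gap: Proposition~\ref{thm_christ} is stated for \emph{arbitrary} finite groups, not merely abelian ones --- indeed the surrounding section is explicitly about the noncommutative setting, and this is the entire point of the proposition. Your reduction invokes Theorem~\ref{discrete} (and formula~\eqref{optimal}), but that result is stated and proved only for finitely-generated \emph{abelian} groups; Christ's argument in \cite{Ch1} likewise assumes commutativity. So your step~(2) simply does not apply when the $G_j$ are non-abelian, and the whole reduction collapses. In the abelian case your argument would go through, but then the proposition is an immediate consequence of Theorem~\ref{discrete} and there is nothing to prove.

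The paper's proof takes a genuinely different route that survives the loss of commutativity: it first establishes an abstract Ball-type inequality (Proposition~\ref{ball}), valid on any finite group, showing that convolutions of extremisers are extremisers; it then uses an ergodic theorem for random walks on finite groups (Lemma~\ref{limit}, via Theorem~\ref{ergodic}) to show that iterated self-convolutions of any nonnegative $L^1$-normalised function converge, along a subsequence, to a normalised subgroup indicator. Starting from an arbitrary extremiser (which exists by compactness) and passing to the limit yields a subgroup-indicator extremiser. Neither ingredient requires the groups to be abelian, which is what makes the argument work here.
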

In the remainder of this section we prove Proposition \ref{thm_christ}. The key ingredients are an abstract form of an inequality of K. Ball \cite{Ball} (see also \cite{BCCT1}) and properties of iterated convolutions of a function on a finite  group (the forthcoming Lemma \ref{limit}).
\begin{prop}[Ball's inequality]\label{ball} We have
$$
\BL(\varphi,\mathbf{s};\mathbf{f})\BL(\varphi,\mathbf{s};\mathbf{g})
\leq \BL(\varphi,\mathbf{s})
\BL(\varphi,\mathbf{s};\mathbf{f}*\mathbf{g}),$$
where $\mathbf{f}*\mathbf{g}=(f_j*g_j)$.
\end{prop}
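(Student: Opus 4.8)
The plan is to exploit the convolution structure by realizing $\BL(\varphi,\mathbf{s};\cdot)$ as a ratio that interacts multiplicatively with convolution, via a Fubini-type rearrangement on $G\times G$. First I would unwind the definition: the numerator of $\BL(\varphi,\mathbf{s};\mathbf{f}*\mathbf{g})$ is
\[
\int_G \prod_{j=1}^m (f_j*g_j)(\varphi_j(x))^{s_j}\,d\mu(x).
\]
The key observation is that, since each $\varphi_j$ is a group homomorphism, for any $u,v\in G$ we have $\varphi_j(u)+\varphi_j(v)=\varphi_j(u+v)$, and hence by translation-invariance of counting measure the value $(f_j*g_j)(\varphi_j(x))$ can be written as an average over $G$ of products $f_j(\varphi_j(u))\,g_j(\varphi_j(x-u))$. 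Concretely, $(f_j*g_j)(\varphi_j(x)) = \sum_{y_j\in G_j} f_j(y_j)g_j(\varphi_j(x)-y_j)$, but the crucial point is that after substituting $x = u+v$ and integrating over $(u,v)\in G\times G$ (dividing by $|G|$), the integrand factorizes into the $\mathbf{f}$-part evaluated at $u$ and the $\mathbf{g}$-part evaluated at $v$ — at least once one interchanges the convolution sum with the $s_j$-th power. This is exactly where Ball's trick enters: one uses the concavity/superadditivity inequality that lets the product of $s_j$-th powers of convolutions be bounded below, or rather one runs the argument in the direction where the Brascamp--Lieb functional applied to the convolution dominates the relevant integral.

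More precisely, the key step I would carry out is: write
\[
\int_G \prod_{j=1}^m (f_j*g_j)(\varphi_j(x))^{s_j}\,d\mu(x)
\ \ge\ \frac{1}{|G|}\int_G\int_G \prod_{j=1}^m \bigl(f_j(\varphi_j(u))\,g_j(\varphi_j(v))\bigr)^{s_j}\,d\mu(u)\,d\mu(v)
\]
after the change of variables $x=u+v$ — here the inequality (in fact equality, after the substitution, if one expands carefully, but an inequality suffices) comes from restricting the convolution sum, or from Ball's pointwise inequality comparing $\prod(f_j*g_j)^{s_j}$ with the convolution of $\prod f_j^{s_j}$ against $\prod g_j^{s_j}$ when $\sum s_j$ relates appropriately — in the finite-group setting the cleanest route is to note that $(f_j*g_j)(\varphi_j(u+v)) \ge$ the single term $f_j(\varphi_j(u))g_j(\varphi_j(v))$ when $f_j,g_j\ge 0$, since all summands are nonnegative. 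Granting this, the double integral factorizes as
\[
\frac{1}{|G|}\Bigl(\int_G\prod_{j=1}^m f_j(\varphi_j(u))^{s_j}d\mu(u)\Bigr)\Bigl(\int_G\prod_{j=1}^m g_j(\varphi_j(v))^{s_j}d\mu(v)\Bigr).
\]
On the other hand, by definition of the supremum, $\int_G \prod_j (f_j*g_j)(\varphi_j(x))^{s_j}d\mu(x)\le \BL(\varphi,\mathbf{s})\prod_j\bigl(\int_{G_j}(f_j*g_j)\bigr)^{s_j}$, and since $\int_{G_j} f_j*g_j = \frac{1}{?}(\int f_j)(\int g_j)$ up to the normalization of counting measure — one must track the $|G_j|$ or $|G|$ factors here carefully — the denominators combine correctly. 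Dividing through by $\prod_j(\int f_j)^{s_j}(\int g_j)^{s_j}$ and rearranging yields exactly
\[
\BL(\varphi,\mathbf{s};\mathbf{f})\,\BL(\varphi,\mathbf{s};\mathbf{g})\le \BL(\varphi,\mathbf{s})\,\BL(\varphi,\mathbf{s};\mathbf{f}*\mathbf{g}),
\]
once the bookkeeping of normalization constants ($|G|$ versus $|G_j|$) is seen to cancel.

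The main obstacle I anticipate is precisely this constant-bookkeeping: the definition of $\BL(\varphi,\mathbf{s};\cdot)$ has the $G$-integral upstairs and the $G_j$-integrals downstairs, all with counting measure, and convolution on $G_j$ interacts with $\int_{G_j}$ by a factor depending on $|G_j|$, while the change of variables $x=u+v$ introduces a factor of $|G|$; getting all of these to cancel so that the clean inequality emerges requires care (and is the reason one works with the \emph{functional} $\BL(\varphi,\mathbf{s};\cdot)$ as a scale-invariant ratio rather than with unnormalized integrals). A secondary, more conceptual point is justifying the pointwise inequality $(f_j*g_j)(\varphi_j(u+v))\ge f_j(\varphi_j(u))g_j(\varphi_j(v))$: this is trivial from nonnegativity when we view $(f_j*g_j)(w)=\sum_{y}f_j(y)g_j(w-y)$ and pick $y=\varphi_j(u)$, $w=\varphi_j(u+v)=\varphi_j(u)+\varphi_j(v)$, so $w-y=\varphi_j(v)$ — so in fact this step is routine, and the whole proof reduces to the normalization check. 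I would present it in the scale-invariant form throughout to sidestep the issue entirely, noting that both sides of the claimed inequality are ratios invariant under $f_j\mapsto \lambda_j f_j$, so one may normalize $\int_{G_j}f_j = \int_{G_j}g_j = 1$ and then the estimate is immediate from the displayed factorization together with the trivial bound $\BL(\varphi,\mathbf{s};\mathbf{f}*\mathbf{g})\ge |G|^{-1}\int_G\prod_j(f_j*g_j)(\varphi_j(x))^{s_j}d\mu(x) / \prod_j(\int f_j*g_j)^{s_j}$ and $\int f_j * g_j$ computed explicitly.
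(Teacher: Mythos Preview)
There is a genuine gap. Your pointwise inequality $(f_j*g_j)(\varphi_j(u+v))\ge f_j(\varphi_j(u))\,g_j(\varphi_j(v))$ is correct but far too lossy: after the change of variables it yields (with the normalisation $\int_{G_j} f_j=\int_{G_j} g_j=1$, so that also $\int_{G_j} f_j*g_j=1$)
\[
\BL(\varphi,\mathbf{s};\mathbf{f}*\mathbf{g})\ \ge\ \frac{1}{|G|}\,\BL(\varphi,\mathbf{s};\mathbf{f})\,\BL(\varphi,\mathbf{s};\mathbf{g}),
\]
which is the claimed inequality with $|G|$ in place of $\BL(\varphi,\mathbf{s})$. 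But $\BL(\varphi,\mathbf{s})$ can be arbitrarily smaller than $|G|$: for H\"older data ($G=G_j$, $\varphi_j=\mathrm{id}$, $\sum_j s_j=1$) one has $\BL(\varphi,\mathbf{s})=1$ regardless of $|G|$. So the stray $|G|^{-1}$ is not a bookkeeping artefact that cancels against a normalisation constant; it is a real loss from discarding all but one term of each convolution. The upper bound you invoke in the other direction is just the definition of $\BL(\varphi,\mathbf{s};\mathbf{f}*\mathbf{g})$ and cannot be combined with your lower bound to recover the correct factor.

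The missing idea, and what the paper actually does, is to apply the Brascamp--Lieb inequality \emph{inside} the double integral rather than use a crude pointwise bound. Write the product $\BL(\varphi,\mathbf{s};\mathbf{f})\,\BL(\varphi,\mathbf{s};\mathbf{g})$ as a double integral over $G\times G$ and, for each fixed $x$, recognise the inner $y$-integral as the Brascamp--Lieb form evaluated at the functions $h_j^{x}(z):=f_j(z)\,g_j(\varphi_j(x)z^{-1})$. Applying the defining inequality for $\BL(\varphi,\mathbf{s})$ to this inner integral produces exactly the factor $\BL(\varphi,\mathbf{s})$, and since $\int_{G_j}h_j^{x}=(f_j*g_j)(\varphi_j(x))$, the outer $x$-integral then gives the numerator of $\BL(\varphi,\mathbf{s};\mathbf{f}*\mathbf{g})$. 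No factor of $|G|$ ever appears.
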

 As in the euclidean case, Ball's inequality guarantees that a convolution of extremisers of the Brascamp--Lieb functional is also an extremiser -- see \cite{BCCT1}.
\begin{proof}
By homogeneity it is enough to prove Proposition \ref{thm_christ} when $\int_{G_j} f_j d\mu_j=\int_{G_j} g_j d\mu_j=1$ for each $j$.
Now, by the translation-invariance of the measures,
\begin{eqnarray*}
\begin{aligned}
\BL(\varphi,\mathbf{s};\mathbf{f})\BL(\varphi,\mathbf{s};\mathbf{g})&=\int_G\int_G\prod_{j=1}^m f_j(\varphi_j(y))^{s_j}g_j(\varphi_j(xy^{-1}))^{s_j}d\mu(x)d\mu(y)\\
&=\int_G\int_G\prod_{j=1}^m f_j(\varphi_j(y))^{s_j}g_j(\varphi_j(x)\varphi_j(y)^{-1})^{s_j}d\mu(y)d\mu(x)\\
&=\int_G\int_G\prod_{j=1}^m h_j^{x}(\varphi_j(y))^{s_j}d\mu(y)d\mu(x),
\end{aligned}
\end{eqnarray*}
where $h_j^{x}(z)=f_j(z)g_j(\varphi_j(x)z^{-1})$.
Hence
\begin{eqnarray*}
\begin{aligned}
\BL(\varphi,\mathbf{s};\mathbf{f})\BL(\varphi,\mathbf{s};\mathbf{g})&\leq  \BL(\varphi,\mathbf{s})\int_G\prod_{j=1}^m \left(\int_{G_j}h_j^{x}(z_j)d\mu_j(z_j)\right)^{s_j}d\mu(x)\\
&= \BL(\varphi,\mathbf{s})\int_G\prod_{j=1}^m(f_j*g_j(\varphi_j(x)))^{s_j}d\mu(x)\\
&=\BL(\varphi,\mathbf{s})
\BL(\varphi,\mathbf{s};\mathbf{f}*\mathbf{g}).
\end{aligned}
\end{eqnarray*}
\end{proof}
\begin{lemma}\label{limit} Let $G$ be a finite group with the counting measure. Let $f$ be a  non-negative function on $G$ with $\|f\|_{L^1(G)}=1$ and $f^{(\ell)}=f\ast  \cdots\ast f $ be the $\ell$-fold convolution of $f$ with itself. Then there is a subgroup $\Gamma$ of $G$ and  a positive integer $k_\circ$ such that  $f^{(\ell k_\circ)}$ converges to  $1/|\Gamma|\,\chi_\Gamma$ as $\ell\to \infty.$ Here $\chi_\Gamma$ is the characteristic function of $\Gamma.$
\end{lemma}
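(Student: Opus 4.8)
The plan is to analyze the iterated convolution powers of $f$ on the finite group $G$ by passing to the Fourier transform on $G$ (i.e. the characters of $G$, which here is finite and abelian in the intended application, though for a general finite group one would instead work with the representation theory / the group algebra $\C[G]$). First I would let $\Gamma$ be the subgroup of $G$ generated by the support of $f$; replacing $G$ by $\Gamma$ we may assume $\supp f$ generates $G$, so that $f$ is "adapted" to $G$. The point is then to show that the convolution powers $f^{(\ell)}$ stabilise along an arithmetic progression of exponents to the uniform measure $\frac{1}{|G|}\chi_G$.

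The key analytic input is that convolution by $f$ acts on $\C[G]$ (or on $L^2(G)$) as an operator $T_f$ of operator norm $\le \|f\|_{L^1}=1$, and since $f\ge 0$ with $\|f\|_1=1$, the constant function $\chi_G$ is a fixed point, i.e. an eigenvector of eigenvalue $1$. In the abelian case one diagonalises $T_f$ via characters: $\widehat{f^{(\ell)}}(\chi)=\widehat f(\chi)^\ell$, and $|\widehat f(\chi)|\le 1$ with equality iff $\chi$ is constant on $\supp f$, hence (since $\supp f$ generates $G$) iff $\chi$ is trivial. So for every nontrivial character $\chi$ we have $|\widehat f(\chi)|<1$, but $\widehat f(\chi)$ may be a root of unity on the unit circle — this is exactly the periodicity issue (e.g. $f=\chi_{\{g\}}$ for $g$ of order $d$ gives $f^{(\ell)}$ cycling through point masses). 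To kill the phases I would choose $k_\circ$ to be a common multiple of the (finitely many) orders of those $\widehat f(\chi)$ that happen to be roots of unity — more robustly, since there are finitely many characters, the set $\{\widehat f(\chi)^{k_\circ}:\chi\ne 1\}$ can be forced to avoid the unit circle for a suitable $k_\circ$, after which $\widehat{f^{(\ell k_\circ)}}(\chi)=(\widehat f(\chi)^{k_\circ})^\ell\to 0$ as $\ell\to\infty$ for every $\chi\ne 1$, while $\widehat{f^{(\ell k_\circ)}}(1)=1$. Fourier inversion then gives $f^{(\ell k_\circ)}\to\frac{1}{|G|}\chi_G$, which back in the original group is $\frac{1}{|\Gamma|}\chi_\Gamma$.

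For a genuinely noncommutative finite $G$ (which is the level of generality of Proposition \ref{thm_christ}) I would instead use the Perron--Frobenius / Markov-chain perspective: $T_f$ is the transition operator of a random walk on $G$ driven by $f$; its image after identifying the recurrent structure is a normal subgroup, and one gets convergence of $f^{(\ell k_\circ)}$ to the normalised indicator of a subgroup by the standard convergence theorem for random walks on finite groups (all eigenvalues of $T_f$ have modulus $\le 1$; those of modulus $1$ on the subspace orthogonal to the fixed space are roots of unity by finiteness of the spectrum and the fact that $T_f$ preserves the simplex of probability measures, and a suitable $k_\circ$ removes them). Concretely: the coset space on which the walk becomes aperiodic and irreducible is the quotient by the subgroup $\Gamma=\langle \supp f\cdot(\supp f)^{-1}\rangle$ or similar; the details are a routine unwinding of the spectral picture.

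The main obstacle is the periodicity/aperiodicity bookkeeping — correctly identifying the subgroup $\Gamma$ (it is the group generated by the support of $f$ after recentering, not the support itself) and choosing $k_\circ$ so that all unimodular non-principal spectral values are simultaneously trivialised. Everything else (contraction, the $\ell\to\infty$ limit, Fourier inversion) is soft; the care is entirely in handling the finitely many eigenvalues of modulus exactly $1$ that are not at $1$. I would present the abelian case cleanly via characters, remark that the general finite-group case follows from the standard random-walk convergence theorem, and note that $k_\circ=\mathrm{lcm}$ of the relevant periods suffices.
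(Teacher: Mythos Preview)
Your spectral approach is a viable alternative to the paper's argument, but as written it contains a genuine error in the abelian analysis. You assert that when $\supp f$ generates $G$ one has $|\widehat f(\chi)|<1$ for every nontrivial character $\chi$; this is false. The correct statement is that $|\widehat f(\chi)|=1$ if and only if $\chi$ is \emph{constant} on $\supp f$, and such $\chi$ can be nontrivial even when $\supp f$ generates $G$ (take $G=\mathbb{Z}/4$, $\supp f=\{1,3\}$, $\chi(k)=(-1)^k$). Consequently your claim that one can force $\{\widehat f(\chi)^{k_\circ}:\chi\neq 1\}$ to avoid the unit circle is impossible: if $|\widehat f(\chi)|=1$ then $|\widehat f(\chi)^{k_\circ}|=1$ for every $k_\circ$. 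The related claim that $f^{(\ell k_\circ)}\to |G|^{-1}\chi_G$ then fails, and your identification $\Gamma=\langle\supp f\rangle$ is wrong (in the example above the limit along even powers is $\tfrac12\chi_{\{0,2\}}$, not $\tfrac14\chi_{\mathbb{Z}/4}$).

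The fix is exactly what you gesture at later: the characters with $|\widehat f(\chi)|=1$ form a subgroup $H\le\widehat G$ (namely the annihilator of $\langle\supp f\cdot(\supp f)^{-1}\rangle$), and choosing $k_\circ$ so that $\widehat f(\chi)^{k_\circ}=1$ for all $\chi\in H$ makes these peripheral values equal to $1$ rather than disappear. One then has $\widehat{f^{(\ell k_\circ)}}(\chi)\to 1$ for $\chi\in H$ and $\to 0$ otherwise, whence by Fourier inversion $f^{(\ell k_\circ)}\to|\Gamma|^{-1}\chi_\Gamma$ with $\Gamma=H^\perp=\langle\supp f\cdot(\supp f)^{-1}\rangle$. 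With this correction your approach is complete (in the abelian case) and genuinely different from the paper's. The paper instead picks $x_\circ\in\supp f$, passes to $f^{(k_\circ)}$ so that the identity lies in its support (taking $k_\circ$ the order of $x_\circ$), works inside the smallest subgroup $\Gamma$ containing $(\supp f)x_\circ^{-1}$, shows combinatorially that the iterated product sets $\supp f^{(\ell)}$ strictly grow until they fill $\Gamma$, and then invokes a black-box random-walk convergence theorem. Your spectral route is arguably cleaner once the peripheral spectrum is handled correctly, and it makes the identity $\Gamma=\langle\supp f\cdot(\supp f)^{-1}\rangle$ transparent; the paper's route has the advantage of working verbatim for noncommutative finite $G$, whereas your non-abelian sketch would still need the representation-theoretic details filled in.
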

\begin{proof}[Proof of Proposition \ref{thm_christ}]
Clearly, $\BL(\varphi,\mathbf{s})\ge \sup \BL(\varphi,\mathbf{s};\mathbf{f}), 
$ for any  $\mathbf f=(a_j\chi_{\Gamma_j})$. It suffices to prove that  the reverse inequality also holds. Note that $f_j:G_j\to [0,\infty)$ is an element of $[0,\infty)^{n_j}$, where $n_j$ is the cardinality of $G_j$, hence the set $\{f_j:G_j\rightarrow [0,\infty):\| f_j \|_{p_j}=1\}$ is  compact. Hence the Brascamp--Lieb constant $$\BL(\varphi,\mathbf{s})=\sup_{\|f_j\|_{L^{1}(G_j)}=1}\BL(\varphi,\mathbf{s};\mathbf{f})$$
 is always finite for any $1\le p_j\le \infty$, and an extremiser $\mathbf{f}$ exists.
Let $\mathbf f$ be an extremiser with  $\|f_j\|_{L^1(G_j)}=1$. Let us set $\mathbf f^{(\ell)}=(f_j^{(\ell)})$, with $f_j^{(\ell)}$ the $\ell$-fold convolution of $f_j$. Since each $f_j$ is non-negative, we apply Lemma \ref{limit} to obtain $\Gamma_j \subset G_j$ and $k_j\in \mathbb N$ such that $f_j^{(\ell k_j) }$ converges to $|\Gamma_j|^{-1} \chi_{\Gamma_j}$ as $\ell\to \infty$. Note that 
$\mathbf f^{(\ell k_1\cdots k_m)}$, $\ell\in\N,$  are extremisers of the Brascamp--Lieb functional by Ball's inequality. Letting $\ell \to \infty$, $\BL(\varphi,\mathbf s)=\BL(\varphi,\mathbf s;\mathbf g)$, where $g_j=|\Gamma_j|^{-1}\chi_{\Gamma_j}$ for each $j$.  
\end{proof}

It remains to verify Lemma \ref{limit}, which follows by a routine application of the ergodic theorem for a random walk on a finite group -- formulated in an appropriate abstract setting in \cite[Proposition 9.9.6]{SST}. Here we appeal to the following simplified statement.
\begin{theorem}[\cite{SST}]\label{ergodic}
Suppose $G$ is a finite group. Let $g$ be a nonnegative function on $G$ with $\|g\|_{L^1(G)}=1$ and set $\Delta =\{x\in G : g(x)>0\}$, the support of $g$.  Then $g^{(k)}\to a\chi_G$ as $k\to \infty$ if and only if there exists $k\in \N$ such that 
$G=\Delta\cdots \Delta$ with $k$-factors.
Here $\|a\chi_G\|_{L^1(G)}=1$ and $g^{(k)}$ is the $k$-fold convolution of $g$.
\end{theorem}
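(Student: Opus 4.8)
The plan is to interpret the statement as the standard convergence theorem for the random walk on the finite group $G$ whose step distribution is $g$ (note $g\ge 0$ and $\sum_{x\in G}g(x)=1$), and to prove it by the Perron--Frobenius theorem for primitive stochastic matrices. The starting observation is that, because all quantities involved are nonnegative, convolution produces no cancellation, so that $\supp(g*h)=\supp(g)\cdot\supp(h)$ and hence $\supp(g^{(k)})=\Delta^k$, the $k$-fold product set $\{d_1\cdots d_k:d_i\in\Delta\}$. In these terms the condition ``$G=\Delta\cdots\Delta$ with $k$ factors'' says exactly that $g^{(k)}$ is strictly positive on all of $G$.

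With this dictionary the ``only if'' direction is immediate. If $g^{(k)}\to a\chi_G$ then, $G$ being finite, the convergence is entrywise; moreover $a=1/|G|>0$ since $\|a\chi_G\|_{L^1(G)}=1$. Thus $g^{(k)}(x)\to 1/|G|>0$ for each $x\in G$, so $g^{(k)}$ is strictly positive for all large $k$, which is to say $\Delta^k=G$ for some (indeed every large) $k$.

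For the substantive ``if'' direction I would first upgrade the hypothesis: if $\Delta^{k_0}=G$ then, choosing any $d\in\Delta$ (nonempty since $\|g\|_{L^1(G)}=1$), we have $\Delta^{k+1}\supseteq d\,\Delta^{k}=dG=G$, so $\Delta^{k}=G$ for all $k\ge k_0$. Next introduce the stochastic matrix $P$ indexed by $G$ with $P_{x,y}=g(x^{-1}y)$; a short substitution gives $(P^k)_{x,y}=g^{(k)}(x^{-1}y)$, so that $g^{(k)}=(P^k)_{e,\cdot}$, and shows that $P$ is doubly stochastic (both $\sum_y g(x^{-1}y)$ and $\sum_x g(x^{-1}y)$ equal $\sum_z g(z)=1$). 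Since $\supp(g^{(k)})=G$ for $k\ge k_0$, every entry of $P^{k_0}$ is strictly positive, i.e. $P$ is primitive.

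The crux is then the finite-state ergodic (Perron--Frobenius) theorem: a stochastic matrix possessing a strictly positive power has a unique stationary distribution $\pi$, and $P^k$ converges to the rank-one matrix whose every row equals $\pi$; in particular $g^{(k)}=(P^k)_{e,\cdot}\to\pi$. It remains only to identify $\pi$, and here double stochasticity makes the uniform vector $a\chi_G$, with $a=1/|G|$, stationary, so $\pi=a\chi_G$ by uniqueness and $g^{(k)}\to a\chi_G$ as desired. I expect the one genuinely nontrivial point to be this primitivity-implies-convergence step; I would either cite Perron--Frobenius directly or, in keeping with the harmonic-analytic flavour of the paper, give the equivalent spectral argument via the group Fourier transform, where $\widehat{g^{(k)}}(\rho)=\widehat g(\rho)^k$ and one must check that $\widehat g(\rho)=\sum_x g(x)\rho(x)$, a convex combination of the unitaries $\rho(x)$, has spectral radius strictly below $1$ for every nontrivial irreducible representation $\rho$ --- a strict bound that is precisely what the primitivity forced by $\Delta^{k_0}=G$ supplies.
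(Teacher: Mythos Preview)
Your argument is correct. The ``only if'' direction is indeed trivial once one notes that $a=1/|G|>0$, and for the ``if'' direction you have correctly identified the key structural facts: that $\supp(g^{(k)})=\Delta^k$ because the functions are nonnegative, that $\Delta^{k_0}=G$ propagates to all $k\ge k_0$, that the associated transition matrix $P_{x,y}=g(x^{-1}y)$ is doubly stochastic with $(P^k)_{x,y}=g^{(k)}(x^{-1}y)$, and that primitivity then forces convergence to the unique stationary distribution, which is uniform. Both the Perron--Frobenius route and the representation-theoretic variant you sketch (controlling the spectral radius of $\widehat g(\rho)$ for nontrivial $\rho$) are standard and valid.

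There is, however, nothing to compare against: the paper does not prove Theorem~\ref{ergodic}. It is stated as a quotation from \cite[Proposition 9.9.6]{SST} and invoked as a black box in the proof of Lemma~\ref{limit}. Your write-up therefore supplies a proof where the paper simply cites one; the approach you give is in the same spirit as the cited reference, which treats convergence of random walks on finite groups via exactly this mixture of stochastic-matrix and harmonic-analytic arguments.
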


\begin{proof}[Proof of Lemma \ref{limit}]
We fix $x_\circ$ in the support of $f$ and let  $\mathfrak C(f,G)$ be the collection of subgroups of $G$  containing $\Delta x^{-1}_\circ:=\{xx_\circ^{-1}\in G: f(x)>0\}.$  Clearly $\mathfrak C(f,G)$ is nonempty since $G\in \mathfrak C(f,G)$. Suppose that $\Gamma$ is the smallest subgroup of $G$ contained  in $\mathfrak C(f,G)$.  
We claim that it suffices to show that 
 \begin{equation}\label{conv}
 \lim_{\ell\to \infty} f^{(\ell)} =1/|\Gamma|\,\chi_\Gamma
 \end{equation}
when $x_\circ$ is the identity element of $G$. To see this suppose that  $x_\circ$ is not the identity element, and observe that we can find a positive integer $k_\circ$ such that $ x_\circ^{k_\circ} $ is the identity, since $G$ is  finite. Note that $f^{(k_\circ)}(x_\circ^{k_\circ})\neq 0$. This gives \eqref{conv} with $f^{(k_\circ)}$ instead of $f$, as required.

We now show \eqref{conv} when $x_\circ$ is the identity element. Since the support of $f$ is contained in $\Gamma,$ $f=f|_\Gamma$ is a normalised function on the finite group $\Gamma$. So, if there is a positive integer $\ell_\circ$ such that the $\ell_\circ$-fold product
\[ \Delta \cdots \Delta =\supp f^{(\ell_\circ)}=\Gamma,\]
then \eqref{conv} follows from Theorem \ref{ergodic}. Thus matters are reduced to finding such an $\ell_\circ.$
By the smallness assumption on $\Gamma$, it is straightforward to see that 
\begin{equation}\label{strict}
|\supp f^{(\ell)}|<|\supp f^{(2\ell)}|\le |\Gamma|,
\end{equation}
whenever $\Delta_\ell:=\supp f^{(\ell)}$ is a proper subset of $\Gamma$. Indeed, since the identity element belongs to $\Delta_\ell$, we have that $\Delta_\ell \subset \Delta_{\ell+1}$ and  $|\Delta_\ell|$ is increasing. So, if  \eqref{strict} did not hold, we would have $\Delta_\ell=\Delta_\ell\Delta_\ell$, and so $\Delta_\ell$ would be a proper subgroup of $ \Gamma$ containing the support of $f$, contradicting the smallness of $\Gamma$.
Thus from \eqref{strict} and the finiteness of $\Gamma$, we find a positive integer $\ell_\circ$ such that $\Delta_{\ell_\circ}=\Gamma$.
\end{proof}

\subsection{Fourier duality for the Brascamp--Lieb inequality on Lorentz spaces}
There are further generalisations of the classical Brascamp--Lieb inequality that permit duality statements of the type we present in this paper. The so-called \textit{Lorentz--Brascamp--Lieb inequality}, which also encompasses fractional integral inequalities (such as the Hardy--Littlewood--Sobolev inequality), is a clear example.
Let us denote by $\BL(H,\mathbf p, \mathbf r)$ the smallest constant $C$ for which
\begin{equation}\label{BLL}
\left|\int_H f_1\otimes\cdots\otimes f_m\right|\leq C\prod_{j=1}^m\|f_j\|_{L^{p_j,r_j}(H_j)}
\end{equation}
holds for all $f_j\in L^{p_j,r_j}(H_j)$. Here, as in the classical situation,  the $H_j$ are euclidean spaces and $H$ is a subspace of $ H_1\times \cdots\times H_m$. The exponents $\mathbf p=(p_j)\in (1,\infty)^m$, $\mathbf r=(r_j)\in  [1,\infty]^m$ and $L^{p_j,r_j}(H_j)$ is the standard Lorentz space.  Finiteness of $\BL(H,\mathbf p, \mathbf r)$ for general (Lorentz--Brascamp--Lieb) data was  studied by Christ \cite{Perry}, proving that  if $(H,\mathbf p)$ is simple (in that it admits no proper critical subspace) then \eqref{BLL} holds whenever
\begin{equation}\label{tem}
\sum_{j=1}^m \frac1{r_j} \ge 1.
\end{equation}
For the opposite direction, Bez, Lee, Nakamura and Sawano \cite{BLNS} showed that if the Lorentz refinement \eqref{BLL} holds then necessarily the scaling and dimension conditions (\eqref{scale}, \eqref{dim}) and \eqref{tem} hold.\footnote{It was also observed in \cite{BLNS} that \eqref{scale},\eqref{dim} and \eqref{tem} are not sufficient for \eqref{BLL}, leaving the question of finiteness in \eqref{BLL} open in general.}

Combining these results with Theorem \ref{hilbert} we  obtain a simple duality property for Lorentz--Brascamp--Lieb data.
\begin{corollary}\label{lorentz} Let $H\subseteq H_1\times\cdots\times H_m$ and let $p_j\in(1,\infty)$ and $r_j\in[1,\infty]$ for each $1\le j\le m.$ If $r_j\le \min\{p_j, p_j'\}$ for all $j$, then $\BL(H,\mathbf p,\mathbf r)$ is finite if and only if $\BL(H^\perp,\mathbf p',\mathbf r)$ is finite. In addition, when $(H,\mathbf p)$ is simple,  $\BL(H,\mathbf p,\mathbf r)$ is finite if and only if $\BL(H^\perp,\mathbf p',\mathbf r)$ is finite, without  restriction on the $r_j$.
\end{corollary}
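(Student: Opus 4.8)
The plan is to deduce Corollary \ref{lorentz} by combining the characterisations quoted just before the statement with Theorem \ref{hilbert}. The key observation is that all three of the cited conditions governing finiteness of \eqref{BLL} are \emph{self-dual under the duality map in the appropriate sense}: the scaling and dimension conditions \eqref{scale}, \eqref{dim} for $(H,\mathbf p)$ are equivalent to the corresponding conditions for $(H^\perp,\mathbf p')$ (this is precisely the content of the proof of Theorem \ref{hilbert}), while the Lorentz condition \eqref{tem}, namely $\sum_j 1/r_j \ge 1$, does not involve $H$ or $\mathbf p$ at all and is therefore manifestly unchanged when we pass to $(H^\perp,\mathbf p',\mathbf r)$. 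So the only genuine work is to record which direction of the Christ/Bez--Lee--Nakamura--Sawano results applies under the hypothesis $r_j\le\min\{p_j,p_j'\}$.

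For the first assertion I would argue as follows. Suppose $\BL(H,\mathbf p,\mathbf r)<\infty$. By the necessity part of \cite{BLNS}, the data $(H,\mathbf p)$ satisfies \eqref{scale}, \eqref{dim} and \eqref{tem}. Since $\sum_j 1/r_j\ge 1$ is symmetric, $(H^\perp,\mathbf p',\mathbf r)$ also satisfies \eqref{tem}; and since \eqref{scale}, \eqref{dim} hold for $(H,\mathbf p)$ they hold for $(H^\perp,\mathbf p')$ by (the proof of) Theorem \ref{hilbert}, equivalently $\BL(H^\perp,\mathbf p')<\infty$. The point of the hypothesis $r_j\le\min\{p_j,p_j'\}$ is that it is exactly what is needed to feed into Christ's sufficiency result \eqref{tem}: one must check that $(H^\perp,\mathbf p')$ is \emph{simple}, and here Proposition \ref{simp} applies — since $\mathbf p\in(1,\infty)^m$, simplicity of $(H^\perp,\mathbf p')$ is equivalent to simplicity of $(H,\mathbf p)$. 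But wait — the quoted Christ result requires the datum to be simple, whereas the hypothesis only asks $r_j\le\min\{p_j,p_j'\}$; the resolution is that in the non-simple case one must first factor the datum through its critical subspaces (the standard reduction in \cite{Perry}), reducing to simple pieces on each of which $\sum 1/r_j\ge1$ still holds, and on which the embedding bound $L^{p,r}\hookrightarrow L^{p,\min(p,p')}$ coming from $r_j\le\min\{p_j,p_j'\}$ lets one replace the given Lorentz exponents by the admissible ones. I would spell this reduction out, or alternatively cite it from \cite{Perry,BLNS}. The reverse implication follows by the involutivity $(H^\perp)^\perp=H$, $(\mathbf p')'=\mathbf p$, together with the symmetry of the hypothesis ($r_j\le\min\{p_j,p_j'\}=\min\{p_j',p_j\}$).

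For the second assertion, the hypothesis that $(H,\mathbf p)$ is simple removes the need for any constraint on $\mathbf r$: Christ's theorem gives finiteness of \eqref{BLL} directly from simplicity plus \eqref{tem}, with no Lorentz-exponent restriction. So if $\BL(H,\mathbf p,\mathbf r)<\infty$ then \eqref{tem} holds by \cite{BLNS}, hence holds for the dual data, and $(H^\perp,\mathbf p')$ is simple by Proposition \ref{simp} (using $\mathbf p\in(1,\infty)^m$); Christ's result then yields $\BL(H^\perp,\mathbf p',\mathbf r)<\infty$. The converse is again by involutivity.

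The main obstacle I anticipate is \emph{not} conceptual but one of precise bookkeeping: making sure the exact hypotheses of the cited theorems of Christ \cite{Perry} and of Bez--Lee--Nakamura--Sawano \cite{BLNS} are matched — in particular whether Christ's sufficiency statement is stated for simple data only (forcing the factorisation-through-critical-subspaces step in the non-simple case of the first assertion) or more generally, and whether the necessity of \eqref{tem} in \cite{BLNS} is unconditional. A secondary subtlety is that Theorem \ref{hilbert} as proved above concerns only the classical ($\mathbf r=(1,\dots,1)$, i.e.\ plain $L^p$) constant, so the argument must route through the \emph{conditions} \eqref{scale}, \eqref{dim} rather than through the Lorentz constants directly — but this is exactly how the proof of Theorem \ref{hilbert} is organised, so it causes no real difficulty. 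Everything else is a short symmetric-in-$H^\perp$ check.
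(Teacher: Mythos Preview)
Your treatment of the second (simple) assertion is correct and matches the intended argument: BLNS necessity gives \eqref{tem}, Proposition~\ref{simp} transfers simplicity to $(H^\perp,\mathbf p')$, and Christ's sufficiency result then yields finiteness on the dual side.

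For the first assertion, however, you have taken an unnecessary and potentially gappy detour. You try to invoke Christ's result --- which is stated only for \emph{simple} data --- on a datum that need not be simple, and then appeal to an unspecified ``factorisation through critical subspaces'' to repair this. That reduction is not obviously compatible with the Lorentz exponents, and in any case it is not needed. The role of the hypothesis $r_j\le\min\{p_j,p_j'\}$ is much more elementary than you suggest: since $L^{p,r_1}\hookrightarrow L^{p,r_2}$ for $r_1\le r_2$, the conditions $r_j\le p_j$ and $r_j\le p_j'$ give
\[
\|f_j\|_{L^{p_j}}\lesssim\|f_j\|_{L^{p_j,r_j}}\quad\text{and}\quad \|f_j\|_{L^{p_j'}}\lesssim\|f_j\|_{L^{p_j',r_j}},
\]
so finiteness of the \emph{classical} constants $\BL(H,\mathbf p)$ and $\BL(H^\perp,\mathbf p')$ immediately implies finiteness of $\BL(H,\mathbf p,\mathbf r)$ and $\BL(H^\perp,\mathbf p',\mathbf r)$ respectively. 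The argument then runs cleanly: $\BL(H,\mathbf p,\mathbf r)<\infty$ gives \eqref{scale}, \eqref{dim} for $(H,\mathbf p)$ by \cite{BLNS}, hence $\BL(H,\mathbf p)<\infty$, hence $\BL(H^\perp,\mathbf p')<\infty$ by Theorem~\ref{hilbert}, hence $\BL(H^\perp,\mathbf p',\mathbf r)<\infty$ by the embedding above; and symmetrically for the converse. Christ's theorem, simplicity, and critical-subspace factorisation play no role in this part.
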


In particular, Corollary \ref{lorentz} applied with $r_j=1$ establishes a duality between certain measure-theoretic statements.  Namely, for $p_j\in(1,\infty)$,
\[ | H\cap (A_1\times \cdots\times A_m)|\lesssim \prod_{j=1}^m |A_j|^{\frac1{p_j}}\quad\text{for any } A_j\subset H_j\]
 if and only if
\[ | H^{\perp}\cap (A_1\times \cdots \times A_m)|\lesssim \prod_{j=1}^m |A_j|^{\frac1{p_j'}}\quad\text{for any }  A_j\subset H_j.\]


\end{document}